\theoremstyle{plain}
\newtheorem{theorem}{Theorem}[section]
\newtheorem{lemma}[theorem]{Lemma}
\newtheorem{prop}[theorem]{Proposition}
\newtheorem{example}[theorem]{Example}
\theoremstyle{definition}
\newtheorem{definition}[theorem]{Definition}
\theoremstyle{remark}
\theoremstyle{equation}
\numberwithin{equation}{section}
\def\@setcopyright{}
\def\serieslogo@{}
\thanks{This paper contains some results of my master's thesis submitted to Tokyo Institute of Technology.}
\date{\today}
\begin{document}
\author{Yukiko Abe}
\address{Department of Mathematics Tokyo Institute of Technology  Oh-okayama, Meguro, Tokyo 152-8551, Japan \\ e-mail: 09m00031@math.titech.ac.jp}
\title{The clock number of a knot}
\maketitle

\begin{abstract}
This paper is about the clock number of a knot. First we define the clock number by using states of a knot defined by Kauffman. 
Next we show that if $K$ is a prime knot, its clock number is greater than or equal to its crossing number. Finally we prove that its clock number is equal to its crossing number if and only if  $K$ is a two-bridge knot.
\end{abstract}

\section{Introduction}
The Alexander polynomial is an important and famous invariant of a knot. 
In \cite{MR712133}, Kauffman compose the Alexander polynomial by using the states which are obtained by the projection of a knot. 
The method of composing this polynomial is one of the most important results in \cite{MR712133}, but there are many definitions and theorems which are not considered in detail and may have useful informations. 
One example of them is about a lattice. A lattice is a graph which consists of states and is used for finding the states. 
We define the clock number of a knot by using a lattice graph and study it. Then we find a relation between the clock number and the crossing number. 
Moreover we prove that the clock number of a knot is equal to its crossing number if and only if it is a two-bridge knot, using basic knowledges of knot theory in \cite{MR0515288}.

\section{Preliminaries}

\begin{definition}[\cite{MR712133}]\label{def:universe_and_state}
\rm
A universe $U$ is an oriented four-valent planar graph which can be obtained as a projection of an oriented knot without over/under information. 

\begin{figure}[h]
 \begin{center}
 \includegraphics[scale=0.15]{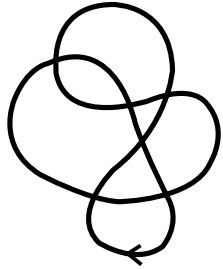}
 \caption{universe}
 \label{fig:universe}
 \end{center}
\end{figure}

A state of $U$ is an assignment of a marker to a corner of each vertex as in the form \raisebox{-2mm}{\includegraphics[scale=0.1]{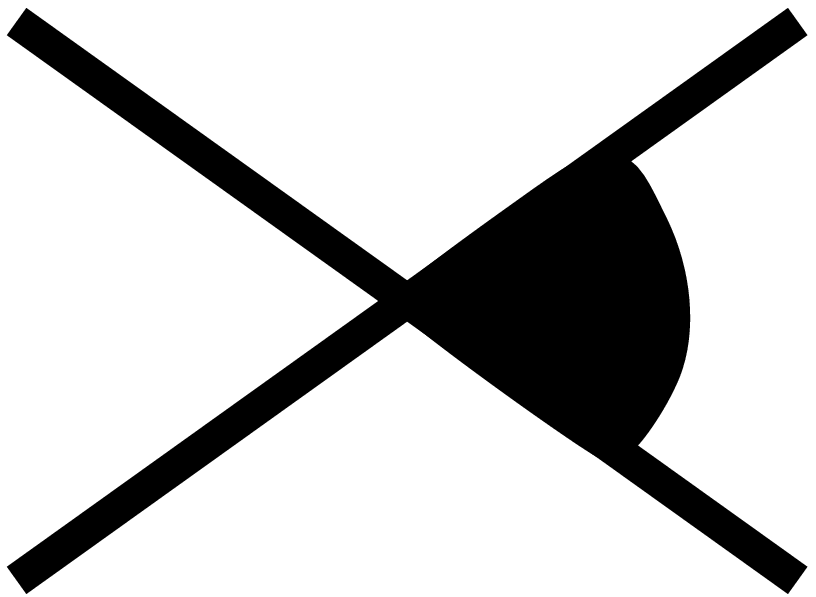}}, so that each region has exactly one marker, except for two adjacent regions where we put stars. See Figure \ref{fig:state} for example.
Note that the number of regions exceeds the number of vertices by two.

\begin{figure}[h]
 \begin{center}
 \includegraphics[scale=0.15]{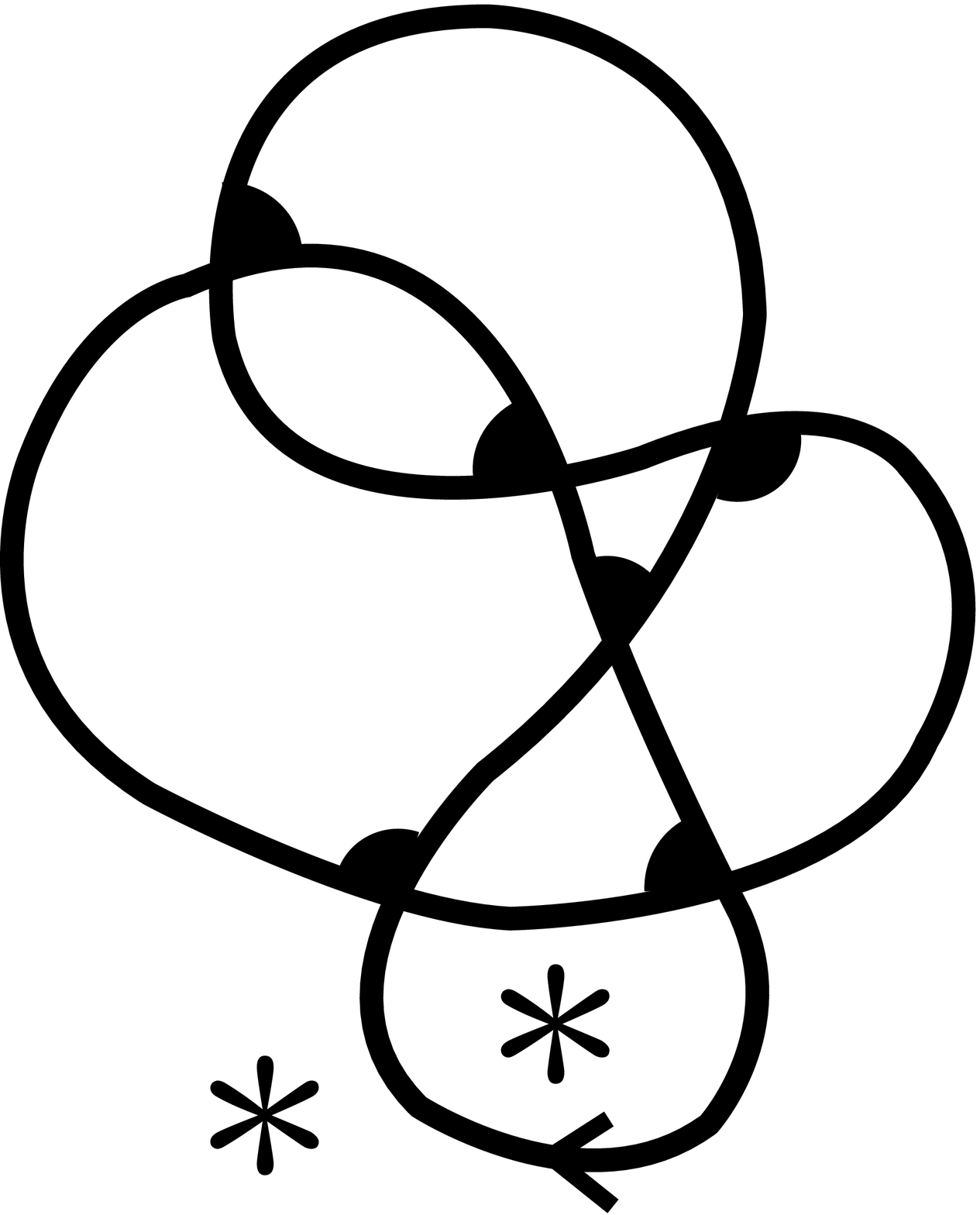}
 \caption{state}
 \label{fig:state}
 \end{center}
\end{figure}

\end{definition}

\begin{definition}
Let $K$ be a knot and $\tilde{K}$ be a diagram of $K$. 
If there exists a circle which has exactly two crossings with $\tilde{K}$, and we have at least one crossing both inside and outside of the circle, then we call the part enclosed in this circle a splittable part. 
When there is at least one splittable circle in the diagram, we call it a non-proper diagram. 
Similarly, when there is no splittable circle in the diagram, we call it a proper diagram. 

\begin{figure}[h]
 \begin{center}
 \includegraphics[scale=0.5]{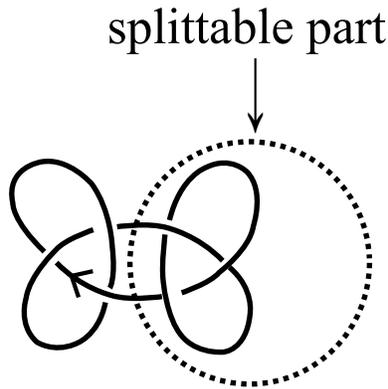}
 \caption{non proper diagram}
 \label{fig:non_prime_knot}
 \end{center}
\end{figure}

We also call a universe obtained by  a proper diagram a proper universe.  
\end{definition}


\begin{definition}
If a universe is proper, we call the first vertex the input point when we walk along the universe from the edge which the starred regions share, and the last vertex the output point. 
We also call a vertex touching one of the starred regions a boundary point.
Note that the input point and the output point are also boundary points (Figure \ref{fig:point_of_string}). 

\begin{figure}[h]
 \begin{center}
 \includegraphics[scale=0.2]{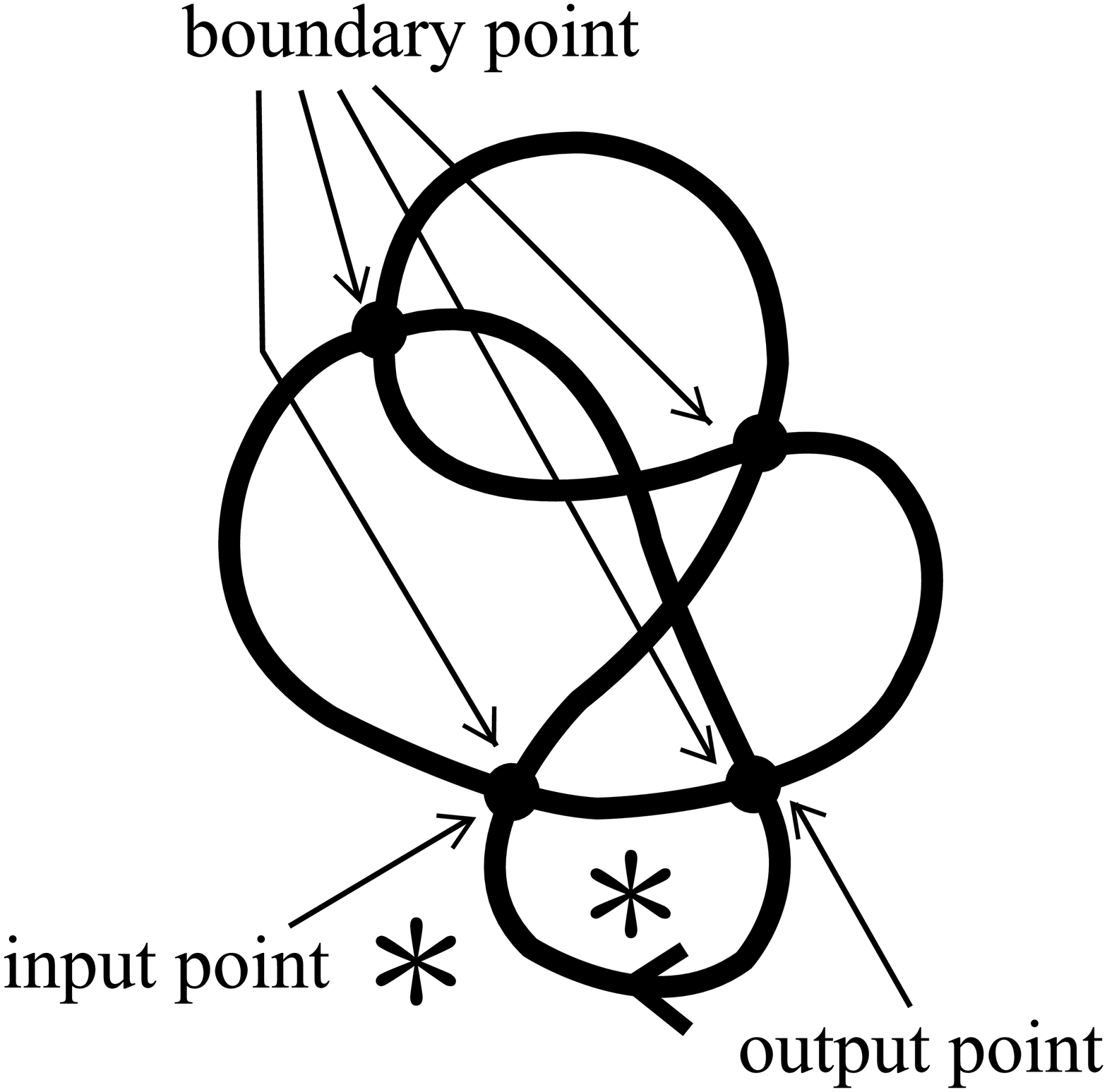}
 \caption{}
 \label{fig:point_of_string}
 \end{center}
\end{figure}
\end{definition}

\begin{definition}[\cite{MR712133}]\label{def:transposition}
\rm
Suppose that two regions share edges and the marker in each region is touching one of these edges.
Then a transposition is defined as a move switching these markers as in Figure \ref{fig:transposition}. 

\begin{figure}[h]
 \begin{center}
 \includegraphics[scale=0.3]{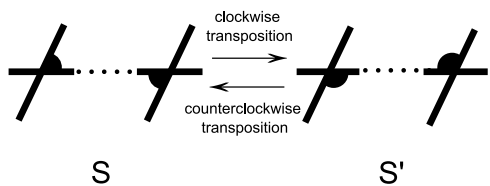}
 \caption{transposition}
 \label{fig:transposition}
 \end{center}
\end{figure}

If a state $S$ is obtained from $S'$ by moving the two markers clockwise, then we call it a clockwise transposition.
Similarly, the transposition from $S'$ to $S$ is a counterclockwise transposition. 

A clocked state is a state which admits only clockwise transpositions and a counterclocked state is a state which admits only counterclockwise transpositions.
See Figure \ref{fig:clocked_state}. 

\begin{figure}[h]
 \begin{center}
 \includegraphics[scale=0.35]{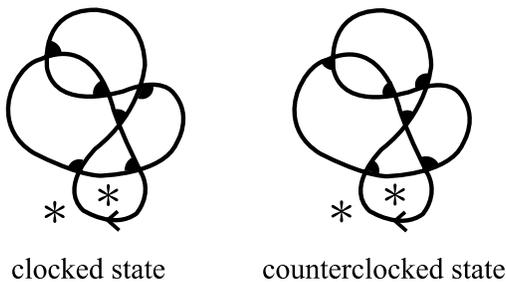}
 \caption{a clocked state and a counterclocked state}
 \label{fig:clocked_state}
 \end{center}
\end{figure}

\end{definition}

Using these definitions, Kauffman proved the following theorem.

\begin{theorem}[\cite{MR712133}]
\rm
Let $U$ be a universe and $\mathcal{S}$ be the set of all the states of $U$ with fixed stars in adjacent regions. Then we have the following.

\begin{enumerate}
\item The set $\mathcal{S}$ has a unique clocked state and a unique counterclocked state.
\item Any state in $\mathcal{S}$ can be reached from the clocked state by a series of clockwise transpositions.
\item Any state in $\mathcal{S}$ can be reached from the counterclocked state by a series of counterclockwise transpositions.
\end{enumerate}
\end{theorem}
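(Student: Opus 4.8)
The plan is to recast the three assertions as statements about a single finite digraph and then to prove that digraph carries the structure of a finite lattice. Let $\mathcal{S}$ be the set of states with the stars fixed, and build a graph whose vertices are the states and whose edges join two states differing by a single transposition. Orient each such edge $S' \to S$ when the move from $S'$ to $S$ is clockwise. With this orientation a clocked state is exactly a source (it admits no counterclockwise move, hence no incoming edge) and a counterclocked state is exactly a sink. Thus assertions (1)--(3) amount to saying that this digraph is acyclic, has a unique source and a unique sink, and that every state is reachable from the source by following edges forward and from the sink by following edges backward. I will obtain all of this by showing that the reachability relation is a partial order making $\mathcal{S}$ a lattice.

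First I would prove termination, i.e.\ acyclicity, by exhibiting an integer potential $\Phi\colon\mathcal{S}\to\mathbb{Z}$ that strictly increases under every clockwise transposition. Each marker occupies one of the four corners at its vertex, and a clockwise transposition advances the two involved markers in the clockwise sense; summing a suitably normalized angular position of the markers over all vertices, measured against the fixed starred edge, should give such a $\Phi$ (an inversion-type count). Since a single clockwise move changes $\Phi$ by a fixed positive amount, no sequence of clockwise transpositions can return to its start, so $\to$ is acyclic, its transitive closure is a genuine partial order on $\mathcal{S}$, and sources and sinks exist.

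Next I would establish two facts about the transposition graph. The first is connectivity of the underlying undirected graph, which I would attempt by induction on the crossing number: smoothing or deleting a boundary vertex, using the input/output structure, produces a universe with one fewer crossing, and I would check that states of $U$ project compatibly with transpositions so that connectivity lifts. The second is local confluence, the diamond property: if two distinct clockwise transpositions are available at a state $S$, producing $A$ and $B$, then $A$ and $B$ can be driven to a common state by further clockwise moves. When the two moves involve disjoint markers they commute outright; the real content is the case where they share a marker or a region, which I would settle by a finite local analysis of the few possible overlapping configurations.

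Granting termination and local confluence, Newman's lemma gives confluence, so every state reduces by clockwise moves to a unique terminal state; combined with connectivity this terminal state is the \emph{same} for all states, yielding a unique sink, the counterclocked state, reachable from every state by clockwise transpositions, which is assertion (3) (reversing the reducing path) together with the counterclocked half of (1). Reversing the orientation, that is, swapping the roles of clockwise and counterclockwise, gives the unique source, the clocked state, and assertion (2). The argument in fact shows that $(\mathcal{S},\le)$ is a lattice with the clocked and counterclocked states as its extreme elements; one could alternatively identify states with the order ideals of an auxiliary poset built from $U$ and invoke Birkhoff's theorem, but constructing that poset is itself the crux. I expect the main obstacle to be the overlapping case of local confluence, where planarity of $U$ and the one-marker-per-region constraint must be used to rule out the configurations that would otherwise break the diamond property.
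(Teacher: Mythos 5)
First, a point of context: the paper does not prove this theorem at all --- it is quoted from Kauffman \cite{MR712133}, where the proof runs through a correspondence between states and trails and an induction on the universe, not through abstract rewriting. So your route is different in kind from the source; the question is whether it is complete, and it is not. Your plan rests on three pillars --- termination (acyclicity) of the clockwise-move digraph, connectivity of the underlying undirected graph, and local confluence --- after which Newman's lemma and Church--Rosser would indeed give a unique sink reachable from every state, with the source handled by mirror symmetry. The framework is legitimate and can in fact be completed (later treatments identify states with perfect matchings of a planar bipartite graph and build genuine height functions), but the only pillar you attempt concretely fails. In a clockwise transposition each of the two markers advances by one corner, a quarter turn, around its own vertex, so any potential of the kind you describe --- a sum over vertices of a quantity depending on which corner the marker occupies, however ``normalized'' against the starred edge --- has increments at a fixed vertex that cancel around a full turn. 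And markers really do make full turns: Lemma \ref{lem:Kauffman_lemma} of this paper (Kauffman's lemma) says every marker away from the boundary points is transposed four or more times in passing from the clocked to the counterclocked state. Hence at every non-constant vertex some clockwise increment is strictly negative; with the literal values $0,1,2,3$ a wrap-around move changes the sum by $-2$ or $-6$, and no choice of reference corner avoids wrap-around at an interior vertex, since its marker visits all four corners. So your $\Phi$ is not monotone, and acyclicity --- which these full turns show to be a genuinely global fact, with no local obstruction to cycling --- is not established.

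Second, your forecast of where the difficulty lies is inverted. Local confluence, which you call ``the crux,'' is actually vacuous: two distinct clockwise moves available at the same state can never overlap. A transposition across an edge $e$ uses exactly the markers of the two regions flanking $e$; since each region holds exactly one marker, two moves sharing a region (or a vertex) would share a marker, and a single marker admits at most one clockwise move, because rotating clockwise from its corner crosses a unique edge. So distinct available clockwise moves are disjoint in both vertices and regions and commute outright --- there are no overlapping configurations to analyze. The real content of the Clock Theorem sits in the two pillars you defer: termination (see above) and connectivity of the transposition graph, for which you offer only an induction sketch (``I would check that states project compatibly'') whose verification is precisely the delicate step in Kauffman's own argument. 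As it stands, your proposal proves only the conditional statement that \emph{if} acyclicity and connectivity hold then the theorem follows; neither hypothesis is proven, and the one argument you do give for acyclicity is incorrect.
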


We can draw an oriented graph with vertex set $\mathcal{S}$ and two vertices are connected by an arrow if they are related by a transposition, where the arrow indicates a clockwise transposition.

As in Figure \ref{fig:lattice}, we put the clocked state at the top, and the counterclocked state at the bottom. We call this graph the lattice of $U$ with a fixed starred regions. 

\begin{figure}[h]
 \begin{center}
 \includegraphics[scale=0.55]{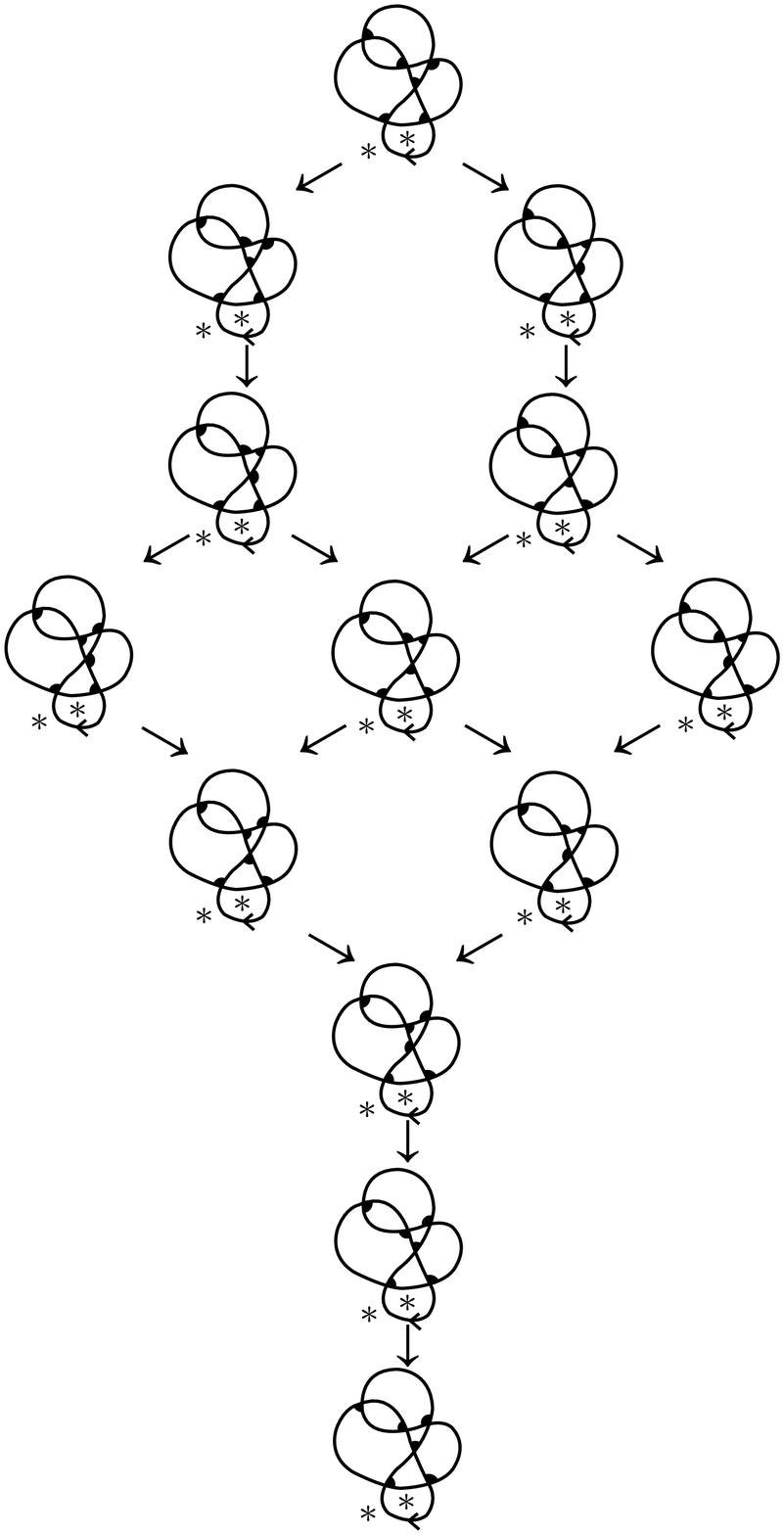}
 \caption{lattice}
 \label{fig:lattice}
 \end{center}
\end{figure}

\section{The definition of the clock number}
\begin{definition}\label{def:clock_number}
\rm
Let $K$ be a knot, $\tilde{K}$ be a diagram of $K$, $\hat{K}$ be the universe obtained from $\tilde{K}$, $R_{1}, R_{2}, \dots, R_{c(\tilde{K})+2}$ be the regions of $\hat{K}$, and $R_{i}$ and $R_{j}$ be adjacent regions, where $c(\tilde{K})$ is the number of crossings in $\tilde{K}$. 
Then we can generate the lattice of the states of $\hat{K}$ with stars in $R_{i}$ and $R_{j}$.
Let $p(\tilde{K}; i, j)$ denote the height of lattice, where the height means one plus the minimum number of transpositions which connect the clocked state and the counterclocked state. 
Define the clock number of $K$ by the fomula
\[
p(K) := \min\{ p(\tilde{K}; i, j)|  \hspace{0.1cm}\tilde{K}: \text{diagram,}\hspace{0.1cm} R_{i}, \hspace{0.1cm} R_{j}: \text{adjacent} \},
\]
where the number is taken over all diagrams of $K$ and adjacent regions.

\end{definition}

\section{A relation between the clock number and the crossing number}

\begin{theorem}\label{thm:relation_between_p_and_c}
\rm
Let $K$ be a prime knot. Then we have
\[
p(K) \geq c(K),
\]
where $c(K)$ is the minimum crossing number.
\end{theorem}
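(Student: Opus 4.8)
The plan is to first rephrase the height as a purely combinatorial quantity and then bound it below by the crossing number in two stages. The starting observation is that, since the number of markers equals the number of vertices and also equals the number of non-starred regions, a state is exactly a bijection between the $c(\tilde K)$ crossings and the $c(\tilde K)$ non-starred regions in which each crossing is paired with a region occupying one of its corners. Under this dictionary a transposition swaps the partners of two crossings sharing an edge (equivalently, an elementary swap along a $4$-cycle of the crossing-region incidence graph), and a clockwise transposition moves both markers clockwise. By the Clock Theorem the set $\mathcal S$ is a lattice with the clocked state on top and the counterclocked state on the bottom, and the counterclocked state is reached from the clocked one by clockwise transpositions only; the minimum number of transpositions joining them is therefore the length of a saturated descending chain, so $p(\tilde K;i,j)$ equals one plus that length. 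Thus it suffices to prove that every descending chain from the clocked matching to the counterclocked matching has length at least $c(K)-1$.

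Next, for a proper diagram I would compare the clocked matching and the counterclocked matching crossing by crossing. The idea is to show that almost every crossing must change its partner along the way and that a single clockwise transposition can resolve only a bounded amount of this discrepancy, so that the number of transpositions is at least $c(\tilde K)-1$. Concretely I would introduce a potential function measuring the total clockwise displacement of the markers, verify that each clockwise transposition decreases it by exactly one, and evaluate it on the two extremal states; the two starred regions together with the input and output boundary points are exactly what should account for the additive constant $-1$. The two-bridge case, where one expects $p=c$, ought to appear as the extremal situation in which the chain is as short as possible, and tracking it serves as a consistency check on the boundary term.

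Finally I would remove the properness hypothesis using primality. If $\tilde K$ is non-proper, a splittable circle meets $\tilde K$ in two points and, because $K$ is prime, the enclosed part is an unknotted arc carrying removable crossings; reducing them yields a proper diagram $\tilde K'$ with fewer crossings. Since adjoining crossings can only enlarge the clock lattice, the number of transpositions for $\tilde K$ is at least that for $\tilde K'$, which by the previous step is at least $c(\tilde K')-1\ge c(K)-1$. Hence $p(\tilde K;i,j)\ge c(K)$ for every diagram and every adjacent pair, and taking the minimum gives $p(K)\ge c(K)$.

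I expect the main obstacle to be the middle step: proving that every descending chain for a proper diagram has length at least $c(\tilde K)-1$, that is, constructing the potential (or, equivalently, an injection from crossings to transpositions) and correctly pinning down the boundary correction contributed by the starred regions and the input and output points. The reduction step also needs care, since one must check that simplifying a trivial tangle never increases the rank of the clock lattice, and it is precisely here that the primality of $K$ is used, to rule out a genuine connected-sum circle that could not be reduced.
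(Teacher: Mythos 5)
Your overall architecture (reduce to proper diagrams using primality, then prove a lower bound of $c(\tilde K)-1$ on the number of transpositions for proper diagrams) matches the paper's, but the heart of the argument is missing. The paper's lower bound for proper diagrams does not come from a potential function; it comes from a lemma of Kauffman (cited from \emph{Formal Knot Theory}): in passing from the clocked state to the counterclocked state of a proper universe, the marker at the input point and the marker at the output point are each transposed exactly once, every other boundary marker (marker at a vertex touching a starred region) is transposed exactly twice, and every remaining marker is transposed four times or more. Since each transposition moves exactly two markers, any sequence of transpositions joining the two extremal states has length at least $\tfrac{1}{2}\bigl(1\times 2+2(c(\tilde K)-2)\bigr)=c(\tilde K)-1$, whence $p(\tilde K;i,j)\geq c(\tilde K)\geq c(K)$. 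Your proposal replaces this with a potential function ``measuring total clockwise displacement'' that is never defined, whose key property (decreasing by exactly one under each clockwise transposition) is never verified, and whose values on the clocked and counterclocked states are never computed; you yourself flag this as the main obstacle. But evaluating that potential difference would require knowing how far each individual marker must travel between the two extremal states, which is precisely the content of Kauffman's lemma --- so the proposal as written defers exactly the step the theorem turns on, rather than proving it.

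The reduction step also rests on an unproven claim, though a lesser one. You assert that ``adjoining crossings can only enlarge the clock lattice,'' i.e.\ that simplifying a splittable tangle never increases the number of transpositions needed. The paper proves a precise statement in this direction (its lemma on splittable parts: a marker at a vertex of one splittable part can never be transposed with a marker at a vertex of another splittable part) and uses it differently: it deduces the strict inequality $p(\tilde K;i,j)>p(\tilde K';i,j)$ for a non-proper diagram, which contradicts the minimality of $\tilde K$, so only the minimizing diagram needs to be shown proper. Your non-strict monotonicity would suffice for your structure, but it is not free: without an argument localizing transpositions to the splittable part (as in the paper's lemma), there is no reason the clock lattice of the larger diagram dominates that of the simplified one. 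Neither gap is a wrong turn --- the strategy is sound and parallels the paper --- but as it stands the proposal establishes the theorem only modulo its two key lemmas.
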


Before we prove Theorem \ref{thm:relation_between_p_and_c}, we introduce a lemma proved by Kauffman in \cite{MR712133}. 

\begin{lemma}[\cite{MR712133}]\label{lem:Kauffman_lemma}
\rm
Let $K$ be a knot, $\tilde{K}$ be a proper diagram, $\hat{K}$ be a universe obtained from $\tilde{K}$, $R_{1}, R_{2}, \dots, R_{c(\tilde{K})+2}$ be the regions of $\hat{K}$, and $R_{i}$ and $R_{j}$ be adjacent regions. 
Then the marker at the input point or the output point is transposed once, any marker at the boundary point (except for the input point and the output point) is transposed twice and any marker in the other cases is transposed four times or more when we get the counterclocked state from the clocked state.
\end{lemma}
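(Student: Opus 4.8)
The plan is to track each individual marker through a minimal sequence of transpositions carrying the clocked state to the counterclocked state, and to compute, vertex by vertex, how far that marker rotates. First I would fix the model: a state places one marker in one of the four quadrants at each vertex, and a clockwise transposition rotates the two markers sitting at the ends of a common edge by one quadrant in the clockwise direction. Since the clocked state sits at the top of the lattice and the counterclocked state at the bottom, a minimal connecting sequence is a maximal descending chain, and every step in it is a clockwise transposition. Consequently each marker moves only clockwise and never backtracks, so the number of times a given marker is transposed equals its total clockwise quadrant-displacement; in particular this count depends only on the clocked and counterclocked positions of that marker, together with its winding, and not on the chosen chain.

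Next I would pin down the clocked and counterclocked positions locally. The two starred regions carry no marker, so at each vertex the quadrants lying in a starred region are forbidden to its marker. Working from the characterization of the clocked state as the one admitting only clockwise transpositions, I would show that every marker must then sit as far counterclockwise as the forbidden quadrants allow; dually, in the counterclocked state every marker is as far clockwise as possible. Thus, writing $a$ for the number of quadrants at a vertex that lie in a starred region, the accessible quadrants form a clockwise arc of length $4-a$, the clocked position is its counterclockwise end, the counterclocked position is its clockwise end, and, since the marker cannot cross a forbidden quadrant, it sweeps the arc exactly once: it is transposed $(4-a)-1 = 3-a$ times whenever $a \geq 1$.

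The lemma then reduces to counting $a$ at each vertex. The shared edge of the two starred regions has the input point and the output point as its endpoints; at such a vertex the two starred regions meet, so $a = 2$ and the marker is transposed $3 - 2 = 1$ time. A boundary point other than these touches exactly one starred region, so $a = 1$ and its marker is transposed $3 - 1 = 2$ times. An interior vertex has no forbidden quadrant, $a = 0$: here the accessible arc is the whole cycle of four quadrants and has no endpoints, so the clocked and counterclocked positions instead coincide, and the monotone clockwise motion must complete at least one full revolution, giving four transpositions or more.

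The main obstacle is precisely this degenerate interior case. For $a \geq 1$ the forbidden quadrant breaks the cycle and the arc argument is clean, but for an interior vertex I must justify two separate points: that the clocked and counterclocked positions of its marker coincide, so that the net rotation is a positive multiple of four rather than, say, two; and that the marker is forced to move at all. I expect the first to follow from the reversal symmetry exchanging the clocked and counterclocked states, and the second from the global constraint that every region hold exactly one marker throughout, which propagates the rotation inward from the boundary. Making these two points precise is where the real work lies, while the remaining arithmetic is routine.
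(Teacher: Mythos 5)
First, note that the paper does not prove this lemma at all: it is quoted from Kauffman's \emph{Formal Knot Theory} and used as a black box, so there is no in-paper argument to compare yours against, and your proposal must stand on its own. Your framework is sound as far as it goes: markers can never enter the starred regions, every step of a clockwise-only path rotates each marker involved one quadrant clockwise, so the number of transpositions at a vertex equals its total monotone clockwise rotation, which is confined to the accessible arc of $4-a$ quadrants. But notice what this framework yields unaided: \emph{upper} bounds (at most $1$ at the input/output points, at most $2$ at the other boundary points). What the paper actually needs from the lemma (in the proofs of Theorem \ref{thm:relation_between_p_and_c} and Lemma \ref{lem:lemma2}) are the matching \emph{lower} bounds, together with the bound of $4$ at interior vertices, and those are exactly the steps your proposal defers.

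Two deferrals are genuine gaps. The first is your claim that in the clocked state every marker sits at the counterclockwise end of its accessible arc. This does not follow ``from the characterization of the clocked state as the one admitting only clockwise transpositions'': a transposition requires a partner marker touching the other endpoint of the shared edge, so a marker can fail to be extremal in its arc and still admit no counterclockwise move, simply because no partner is in position. The absence of counterclockwise moves is a global condition on the state, and converting it into your local extremality statement is essentially the content of the lemma itself at boundary vertices; asserting it without proof is circular. The second gap, which you acknowledge, is the interior case: you need both that the clocked and counterclocked positions of an interior marker coincide and that the marker must move at least once. The ``reversal symmetry'' you invoke for the first point does not exist as an operation on this lattice --- reflecting the diagram interchanges the clocked and counterclocked states of the \emph{mirror} universe, not of $\hat{K}$ itself --- and the ``propagation'' argument for the second point is not sketched at all. (You also assert without justification that a minimal sequence joining the two states uses only clockwise transpositions; this needs an argument too, or else a reading of the height as a minimum over directed paths in the lattice.) So the proposal is a correct reduction of the lemma to three statements, but those three statements carry all the mathematical content, and the tools you name would not establish them.
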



We need another lemma. 

\begin{lemma}\label{lem:another_atom}
\rm
Let $K$ be a knot, $\tilde{K}$ be a non-proper diagram of $K$, and $\hat{K}$ be the universe obtained from $\tilde{K}$. 
Then we cannot transpose the marker at a vertex of one splittable part of $\hat{K}$ and the marker at a vertex of another splittable part of $\hat{K}$. 
\end{lemma}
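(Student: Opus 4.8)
The plan is to analyze the structure of a transposition and show that it is fundamentally a local move. A transposition acts on exactly two markers sitting in two regions that share an edge, with both markers touching that shared edge. So the first thing I would do is recall precisely what data a single transposition involves: a pair of adjacent regions together with the common edge along which the two markers are positioned. My strategy is to argue that if a transposition could involve a marker at a vertex of one splittable part and a marker at a vertex of a different splittable part, then these two vertices would have to lie at a single crossing (since the two markers of a transposition meet at corners of the \emph{same} vertex, or at least their shared edge connects them), contradicting the separation enforced by the splittable circle.

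First I would fix notation: let $C$ be the splittable circle enclosing one splittable part $P_1$, with $P_2$ lying outside $C$, and recall that $C$ meets $\tilde{K}$ in exactly two points, neither of which is a crossing. The key geometric observation is that a transposition is supported at a single vertex of the universe: the two markers it moves are both corners at that one crossing, and the shared edge between the two regions is one of the four edge-germs emanating from that crossing. Consequently a marker ``at a vertex $v_1$'' of $P_1$ and a marker ``at a vertex $v_2$'' of $P_2$ can be simultaneously moved by one transposition only if $v_1 = v_2$, which is impossible because $v_1$ is inside $C$ and $v_2$ is outside. I would make this precise by examining Figure~\ref{fig:transposition}: the two starred-edge markers always occupy corners of a common crossing, so the move is entirely local to that crossing and to the four regions incident to it.

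The one subtlety, and the step I expect to be the main obstacle, is ruling out the possibility that the two regions involved in a transposition straddle the circle $C$ — that is, one region lies (partly) inside and the other (partly) outside, with a shared edge crossing through $C$. Here I would use that $C$ intersects $\tilde{K}$ in exactly two non-crossing points, so the two arcs of $\tilde{K}$ passing through $C$ do not create a crossing on $C$; any vertex is strictly inside or strictly outside. Since a transposition's two markers lie at corners of one vertex and move along edges incident to that single vertex, both affected markers are associated to the same side of $C$. Thus no single transposition can pair a marker at a vertex of $P_1$ with a marker at a vertex of $P_2$, which is exactly the claim.

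Finally I would assemble these observations into a clean contradiction argument: assume for contradiction that some transposition moves a marker at a vertex $v_1 \in P_1$ and a marker at a vertex $v_2 \in P_2$; by the locality of transpositions established above, $v_1$ and $v_2$ must be the same crossing; but $v_1$ lies inside $C$ and $v_2$ outside $C$, so $v_1 \neq v_2$, a contradiction. This completes the proof, and the only genuinely delicate point is the locality claim, which rests on reading off from the definition of transposition that its two markers are corners of a single common vertex.
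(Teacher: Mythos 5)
Your proof rests on a false reading of Definition \ref{def:transposition}. A transposition is \emph{not} supported at a single vertex: the two markers it moves sit at the \emph{two distinct endpoints} of the edge shared by the two regions. Before the move, region $A$'s marker is at vertex $v$ and region $B$'s marker is at vertex $w$, both touching the common edge $e=vw$; the move slides $A$'s marker to $w$ and $B$'s marker to $v$ (each marker staying inside its own region, as the state condition requires). Indeed, if both markers were corners of one and the same crossing, ``switching'' them would move each region's marker into the \emph{other} region, which is meaningless for a state; and Lemma \ref{lem:Kauffman_lemma}, which counts how many times a marker is transposed as it travels from the clocked to the counterclocked state, would make no sense if transpositions did not move markers from vertex to vertex. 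So your central ``locality'' claim --- that the two markers of a transposition meet at corners of the same vertex, hence $v_1=v_2$ --- is simply wrong, and with it the whole contradiction argument collapses.

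This misreading also hides exactly the case that carries the real content of the lemma. The splittable circle $C$ meets the diagram in two points lying on edges of $\hat{K}$, so there are precisely two edges of the universe each of which joins a vertex inside $C$ to a vertex outside $C$. A transposition along one of those edges would do precisely what the lemma forbids, so these edges must be ruled out by an actual argument, not by locality (your ``straddling regions'' paragraph dismisses this case using the same false claim). The paper's proof handles it by a counting/confinement argument: the regions interior to the splittable part $\hat{K_{1}}$ (those not touching the circle) are equal in number to the vertices of $\hat{K_{1}}$, so the state condition forces every marker at a vertex of $\hat{K_{1}}$ to lie in one of these interior regions $D_{1},\dots,D_{n}$; since the $D_{i}$ share edges only with regions inside the circle, and the transposition partner of a marker in some $D_{i}$ must again lie in one of the $D_{j}$, markers of one splittable part can never be transposed with markers of another. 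Some argument of this kind (tracking which \emph{regions} can hold the markers, not just which vertices) is unavoidable, and your proposal contains none of it.
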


\begin{proof}
We expand one of the starred regions of universe $\hat{K}$, and get Figure \ref{fig:lemma_atom_1}. 
\begin{figure}[h]
 \begin{center}
 \includegraphics[scale=0.25]{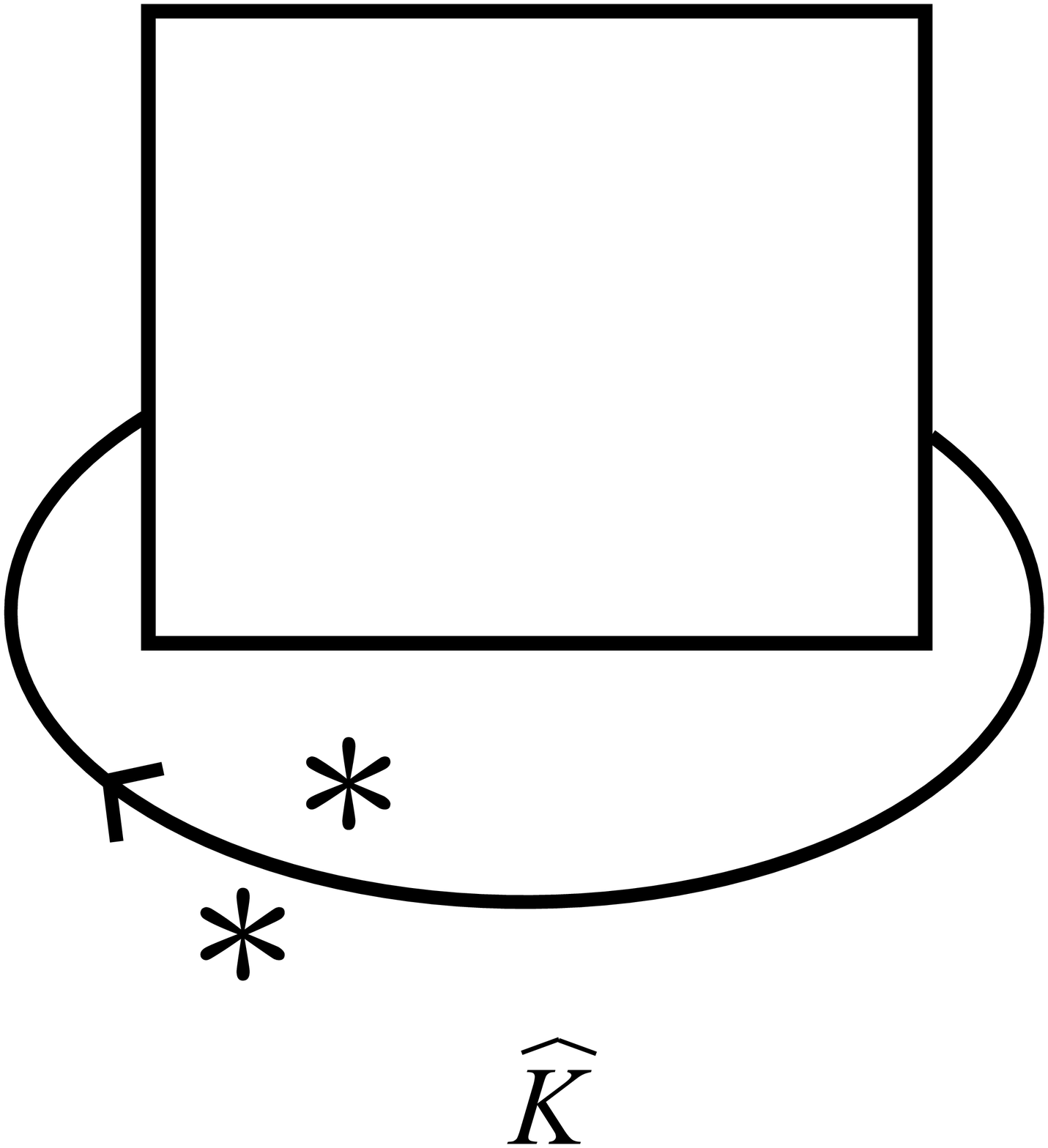}
 \caption{}
 \label{fig:lemma_atom_1}
 \end{center}
\end{figure}

Since $\hat{K}$ is a non-proper universe, we can draw Figure \ref{fig:lemma_atom_2}. 

\begin{figure}[h]
 \begin{center}
 \includegraphics[scale=0.25]{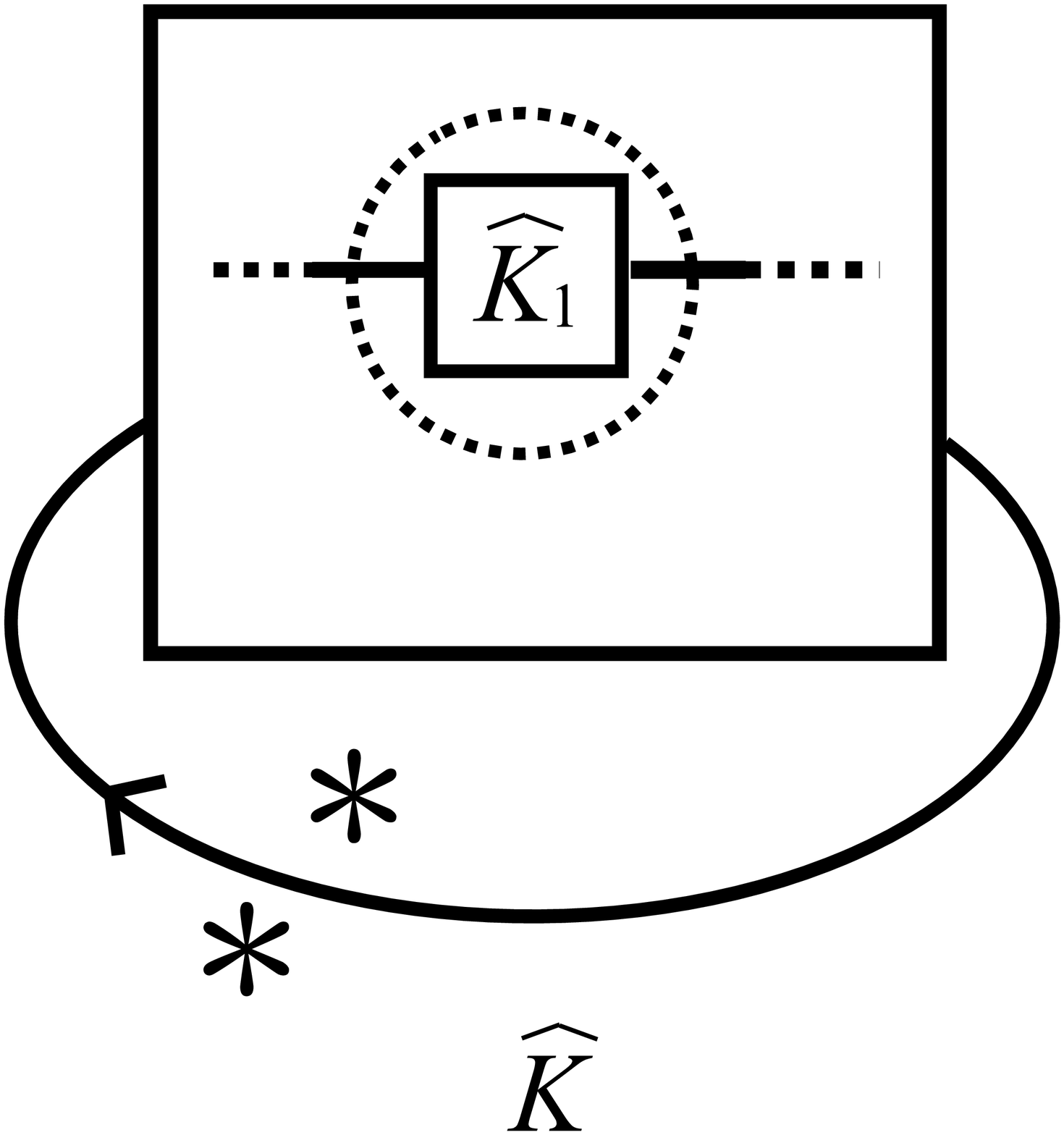}
 \caption{}
 \label{fig:lemma_atom_2}
 \end{center}
\end{figure}

Then the number of the regions inside the dotted circle which are not touching it is equal to the number of the vertices of $\hat{K_{1}}$. 
We call these regions $D_{1}, D_{2}, \dots, D_{n}$. 
Since every vertex and every region (except for two starred regions) has exactly one marker, we see that the markers at the vertices of $\hat{K_{1}}$ are in $D_{1}, D_{2}, \dots, D_{n}$. 

Next we think about the marker which can be transposed with the marker at a vertex of $\hat{K_{1}}$. 
The regions $D_{1}, D_{2}, \dots, D_{n}$ share edges only with the regions inside the dotted circle.  
Moreover every marker on an edge, which $D_{i}$ $(i=1, 2, \dots, n)$ and a region (with $(\ast)$) touching the dotted circle share, is in $D_{1}, D_{2}, \dots,$ or ${D_{n}}$. 
Therefore we can transpose the marker at a vertex of $\hat{K_{1}}$ only with a marker in $D_{1}, D_{2}, \dots$ or ${D_{n}}$.

This completes the proof. 
\end{proof}

\begin{proof}[Proof of \rm Theorem \ref{thm:relation_between_p_and_c}]
\rm
We choose $\tilde{K}$ so that $p(\tilde{K}; i, j) = p(K)$. 
If $\tilde{K}$ is not proper, then we have a splittable part (Figure \ref{fig:theorem1_1}). 
\begin{figure}[h]
 \begin{center}
 \includegraphics[scale=0.22]{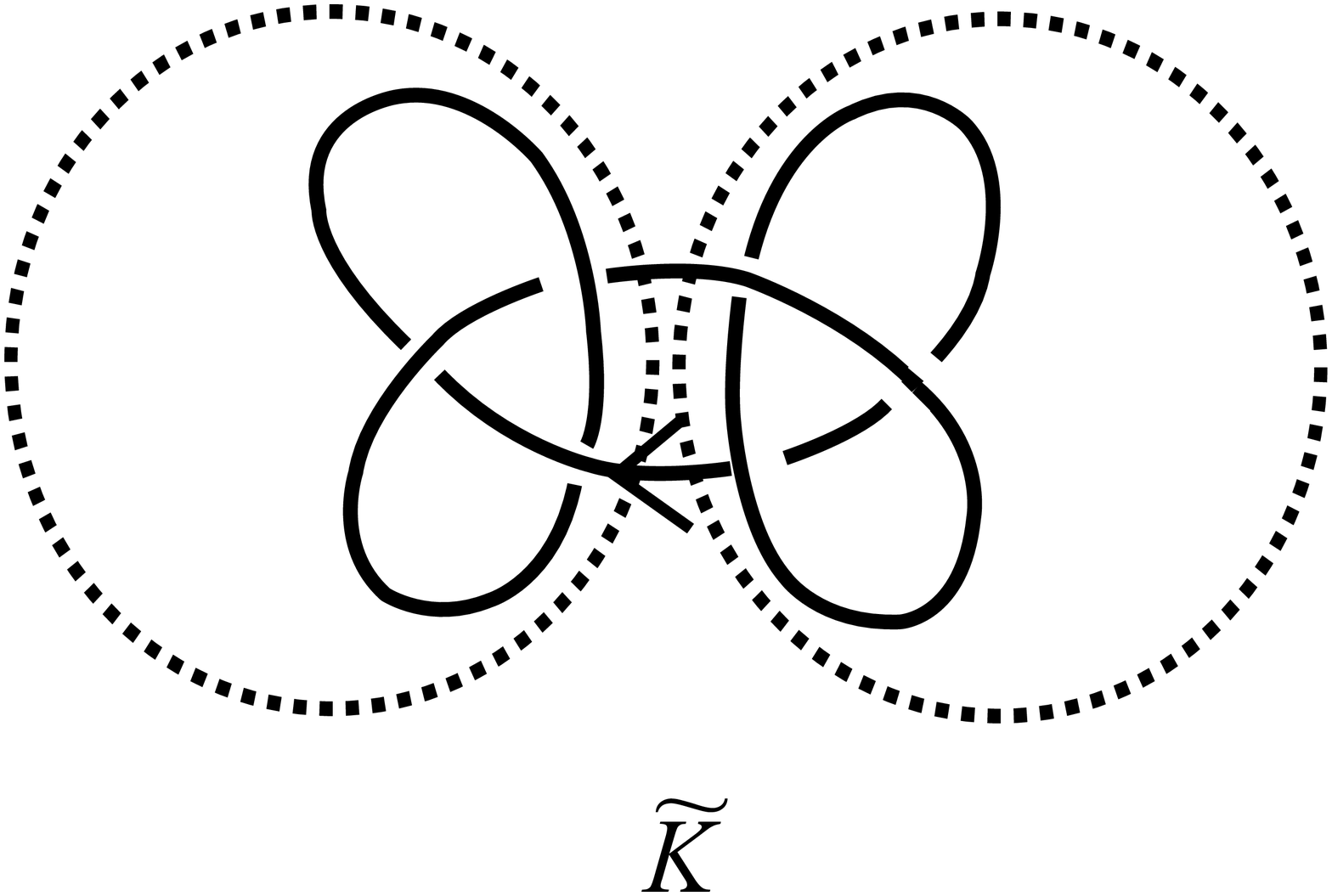}
 \caption{}
 \label{fig:theorem1_1}
 \end{center}
\end{figure}
Since $K$ is a prime knot, we may assume that there exists a splittable part in which we can remove all the crossings by using Reidemeister moves (Figure \ref{fig:theorem1_2}). 
Using Lemma \ref{lem:another_atom}, we have
\[
p(\tilde{K}; i, j) > p(\tilde{K}'; i, j).
\]
This fact contradicts the assumption $p(\tilde{K}; i, j) = p(K)$. 
\begin{figure}[h]
 \begin{center}
 \includegraphics[scale=0.22]{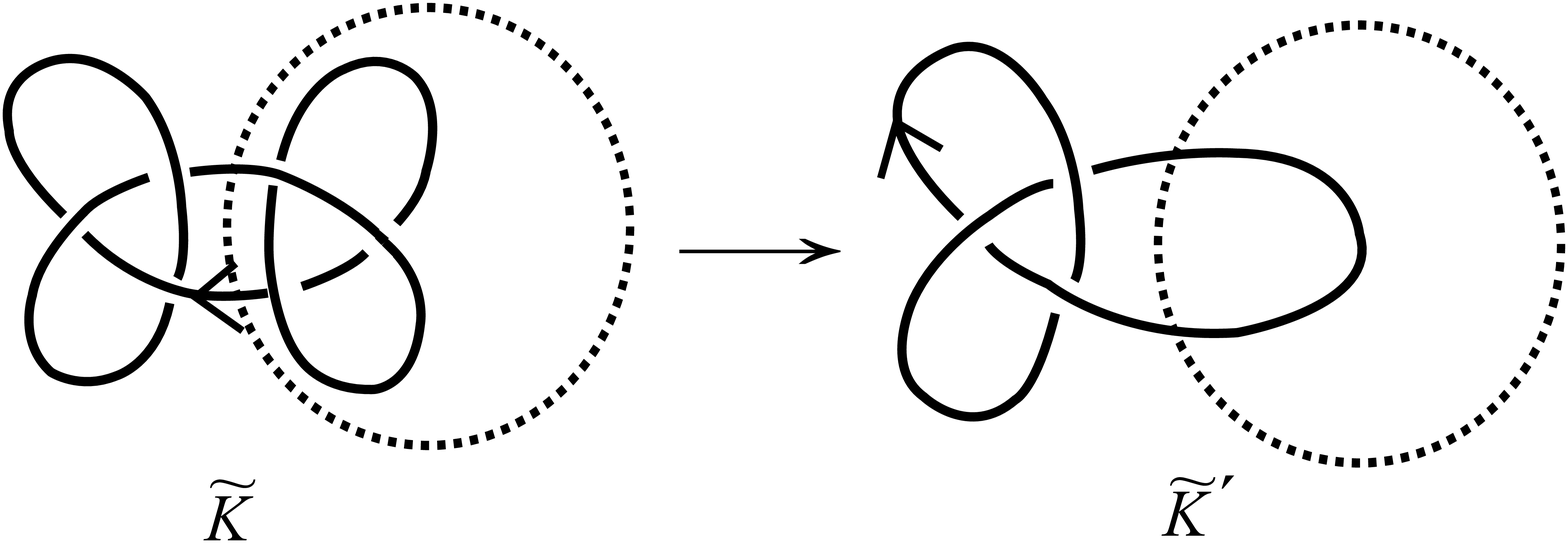}
 \caption{}
 \label{fig:theorem1_2}
 \end{center}
\end{figure}
Therefore $\tilde{K}$ is a proper diagram and $\hat{K}$ is a proper universe.

Since every proper universe has one input point and one output point, we see that the number of boundary points (except for the input point and the output point) is $c(\tilde{K})-2$ or less.
Moreover since $c(\tilde{K}) \geq c(K)$, then we have the following inequality.
\begin{equation}\label{eq:1}
\begin{split}
 p(K) &\geq 1 + \frac{1 \times 2 + 2(c(\tilde{K}) - 2)}{2} \\
      &= c(\tilde{K}) \\
      &\geq c(K),
\end{split}
\end{equation}
completing the proof.

\end{proof}


\section{The main theorem}

In this section we show that the equality of Theorem \ref{thm:relation_between_p_and_c} holds if and only if $K$ is a two-bridge knot. Actually we have the following theorem.

\begin{theorem}[Main Theorem]
\rm
Let $K$ be a prime knot. Then its clock number is equal to its crossing number if and only if $K$ is a two-bridge knot.
\end{theorem}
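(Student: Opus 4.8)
The plan is to extract from the proof of Theorem \ref{thm:relation_between_p_and_c} a sharp characterization of when the bound $p(K)\geq c(K)$ is attained, and then to match that characterization with the two-bridge property. First I would reorganize the transposition count: writing $b$ for the number of boundary points other than the input and output points and $m$ for the number of remaining (interior) vertices, Lemma \ref{lem:Kauffman_lemma} gives that the minimal number of transpositions from the clocked to the counterclocked state is at least $\tfrac12(2\cdot 1+2b+4m)=1+b+2m$, so the height is at least $2+b+2m$. Since the total number of vertices is $c(\tilde K)=2+b+m$, this reads $p(\tilde K;i,j)\geq c(\tilde K)+m\geq c(K)$. Hence $p(K)=c(K)$ forces a diagram $\tilde K$ that is at once minimal ($c(\tilde K)=c(K)$), proper, and has $m=0$, i.e. \emph{every} crossing is a boundary point touching one of the two adjacent starred regions $R_i,R_j$. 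Thus the theorem reduces to the diagrammatic statement: a prime knot admits a minimal diagram in which two adjacent regions are incident to every crossing if and only if it is two-bridge.

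For the ``if'' direction I would begin with a reduced alternating diagram of the two-bridge knot in Conway normal form, that is, a linear sequence of twist regions. Such a diagram is minimal by the Tait conjecture (Kauffman--Murasugi--Thistlethwaite), so $c(\tilde K)=c(K)$. I would then take $R_i$ and $R_j$ to be the two large regions lying respectively above and below the band of twist regions; a direct inspection of the normal form shows these two regions are adjacent and that every crossing is incident to one of them, so $m=0$. Feeding $m=0$ back into the count of the first paragraph gives $p(\tilde K;i,j)=c(\tilde K)=c(K)$, and together with Theorem \ref{thm:relation_between_p_and_c} this yields $p(K)=c(K)$.

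The substance is the ``only if'' direction, which amounts to a geometric lemma: if a reduced prime diagram $D$ has two adjacent regions $A,B$ (sharing the starred edge $e$) incident to every crossing, then $K$ is two-bridge. I would argue as follows. Since $A$ and $B$ are adjacent disks meeting along $e$, their union $\Delta=\bar A\cup\bar B$ is a disk, and the hypothesis says precisely that every crossing lies on the circle $C=\partial\Delta$; the endpoints of $e$, namely the input and output points, cut $C$ into an arc on $\partial A$ and an arc on $\partial B$. Drawing $A$ above and $B$ below $e$, the crossings are organized into a horizontal band, with the knot arcs that avoid $A$ and $B$ dipping into the complementary disk $\Delta'$ and the single arc $e$ passing over the top. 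I would then isotope this band into the standard two-bridge (rational tangle) form, reading the two bridges off the two arcs of $C$, and conclude that the bridge number is at most $2$; primeness excludes the unknot and connected sums, so $K$ is genuinely two-bridge.

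The hard part will be exactly this last lemma: converting the combinatorial condition ``all crossings incident to two adjacent regions'' into an honest bridge presentation requires controlling the over/under information along the band and verifying that two maxima suffice, which is where the reduced and prime hypotheses must be used carefully (to exclude nugatory crossings and to guarantee that $\Delta$ is genuinely a disk, i.e. that $A$ and $B$ meet only along $e$). A secondary point to check is that, when $m=0$, the minimal transposition counts of Lemma \ref{lem:Kauffman_lemma} are realized by a geodesic in the lattice, so that the height equals $c(\tilde K)$ rather than merely exceeding it; this should follow from the clock theorem, since with no interior vertices no transposition can be wasted.
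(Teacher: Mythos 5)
Your first paragraph is correct, and it is essentially the paper's own reduction repackaged more cleanly: the count $p(\tilde K;i,j)\geq 2+b+2m=c(\tilde K)+m$ is equivalent to the paper's Lemma \ref{lem:lemma2} (there phrased as $p(K)+r_i+r_j\geq 2c(K)+2$) combined with Lemma \ref{lem:lemma1}, and your conclusion that $p(K)=c(K)$ forces a proper minimal diagram with $m=0$ matches the paper's derivation of $r_i+r_j=c(K)+2$. The problem is that both substantive steps remaining after this reduction are left as acknowledged sketches, and they are exactly where the paper does its real work. For the ``if'' direction, your count gives only the \emph{lower} bound $p(\tilde K;i,j)\geq c(\tilde K)$; to conclude $p(K)\leq c(K)$ you need an \emph{upper} bound, i.e., an explicit sequence of $c(\tilde K)-1$ transpositions joining the clocked and counterclocked states. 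Your remark that ``with no interior vertices no transposition can be wasted'' is not an argument: Lemma \ref{lem:Kauffman_lemma} is one-sided, and nothing in the clock theorem says its bound is attained. The paper closes this by exhibiting the clocked and counterclocked states on the Conway normal form (Figure \ref{fig:main_theorem_figure_3}) and checking that two markers move exactly once and $c(K)-2$ markers move exactly twice.

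For the ``only if'' direction, the ``geometric lemma'' you isolate (every crossing incident to two adjacent regions implies two-bridge) is precisely the paper's Proposition \ref{prop:prop1}, and you do not prove it: you describe an intended isotopy into rational-tangle form and then concede that ``the hard part will be exactly this last lemma.'' The paper's proof of that proposition is a combinatorial reconstruction, not an isotopy argument: place all $c(\tilde K)$ vertices on the boundary of the two starred regions (legitimate by Lemma \ref{lem:lemma1}), then show that each edge added to restore a four-valent proper planar universe has only two admissible endpoints---any other choice creates a splittable part---so the universe is forced into the twisted-band $4$-plat form. Incidentally, your worry about ``controlling the over/under information along the band'' is a non-issue once the projection is known to be a $4$-plat projection: any assignment of crossings in a row of twist regions still yields a rational tangle, hence a two-bridge knot or the unknot, and primeness excludes the unknot. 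So the skeleton of your proposal is the right one, and the same as the paper's, but as written both halves stop at the point where the proof actually has to be carried out.
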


Before we prove the theorem, we will prove two lemmas and one proposition.

\begin{lemma}\label{lem:lemma1}
\rm
Let $K$ be a knot, $\tilde{K}$ be a proper diagram of $K$, $\hat{K}$ be the universe obtained from $\tilde{K}$, $R_{1}, R_{2}, \dots, R_{c(\tilde{K})+2}$ be regions of $\tilde{K}$, and $R_{i}$ and $R_{j}$ be adjacent regions.
Then the regions $R_{i}$ and $R_{j}$ share exactly two vertices.
\end{lemma}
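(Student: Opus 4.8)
The plan is to prove the two bounds $\lvert\overline{R_i}\cap\overline{R_j}\rvert\ge 2$ and $\lvert\overline{R_i}\cap\overline{R_j}\rvert\le 2$ separately, the second by extracting a splittable circle from the assumption of a third shared vertex and contradicting the properness of $\tilde K$. For the lower bound, adjacency of $R_i$ and $R_j$ means their closures share an edge $e$; since a proper diagram has no nugatory crossing, $e$ joins two distinct vertices, and both of them lie in $\overline{R_i}\cap\overline{R_j}$. Hence $R_i$ and $R_j$ share at least two vertices.

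The key preliminary observation is the checkerboard $2$-colouring of the regions of a knot diagram: two regions sharing an edge receive opposite colours, while the two regions sitting at opposite corners of a single crossing receive the same colour. Because $R_i$ and $R_j$ share the edge $e$, they have opposite colours, so they can never occupy two opposite corners of one crossing. Consequently, at any vertex $v\in\overline{R_i}\cap\overline{R_j}$ the regions $R_i$ and $R_j$ must meet in adjacent corners, which forces an edge of $\partial R_i\cap\partial R_j$ to be incident to $v$. Thus every shared vertex is an endpoint of a shared edge.

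For the upper bound I argue by contradiction. Suppose $\overline{R_i}\cap\overline{R_j}$ contained a third vertex. Viewing $\partial R_i$ as a cyclic sequence of edges, the shared edges then either (i) split into two arcs lying in disjoint parts of this cycle, or (ii) include two consecutive edges meeting at a shared vertex $v$. In case (i) I would choose $p\in\mathrm{int}(R_i)$ and $q\in\mathrm{int}(R_j)$ and join them by two arcs, each crossing a shared edge belonging to a different arc; their union is a simple closed curve meeting $\tilde K$ transversally in exactly two points, and each of the two arcs into which these edges cut $\partial R_i$ carries at least one further crossing, so crossings appear both inside and outside the curve. This is a splittable circle, contradicting properness. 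In case (ii) the two consecutive shared edges force the region on the far side to occupy two opposite corners at $v$, so $\partial R_j$ is pinched at $v$ and $v$ is a reducible crossing; this once more produces a circle meeting $\tilde K$ in two points with crossings on both sides, again contradicting properness.

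I expect the main obstacle to be case (ii): one must argue carefully that a region touching a crossing at two opposite corners is incompatible with a \emph{proper} diagram, not merely with a reduced one, and then extract the separating circle. A secondary technical point is to verify in case (i) that the constructed curve really meets the diagram in only two points and genuinely separates the crossings, together with disposing of the diagrams with too few crossings, where the statement can be checked directly. Assembling these cases shows that $\overline{R_i}\cap\overline{R_j}$ contains exactly two vertices.
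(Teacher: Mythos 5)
Your proof is correct and follows essentially the same route as the paper: the paper likewise obtains the lower bound from the shared edge, and rules out a third shared vertex by splitting into the case where $R_i$ and $R_j$ sit at opposite corners of the extra crossing (handled there by a parity count of edges inside a circle, which is exactly the content of your checkerboard-colouring observation) and the case where they sit at adjacent corners (handled, as you do, by exhibiting a splittable part contradicting properness). Your write-up is somewhat more careful than the paper's picture-based argument, notably in the sub-case analysis of how the shared edges sit along $\partial R_i$, but the decomposition and the key ideas coincide.
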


\begin{proof}
\rm
We can draw $R_{i}$ and $R_{j}$ as in Figure \ref{fig:lemm1_figure_1}, because $R_{i}$ and $R_{j}$ share at least one edge and two vertices.
\begin{figure}[h]
 \begin{center}
 \includegraphics[scale=0.4]{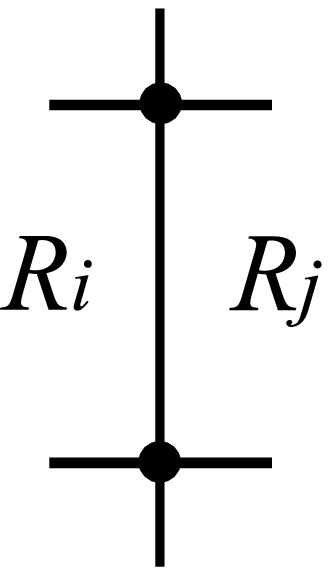}
 \caption{}
 \label{fig:lemm1_figure_1}
 \end{center}
\end{figure}
We assume for contradiction that they share one more vertex as in Figure \ref{fig:lemm1_figure_2}.
\begin{figure}[h]
 \begin{center}
 \includegraphics[scale=0.4]{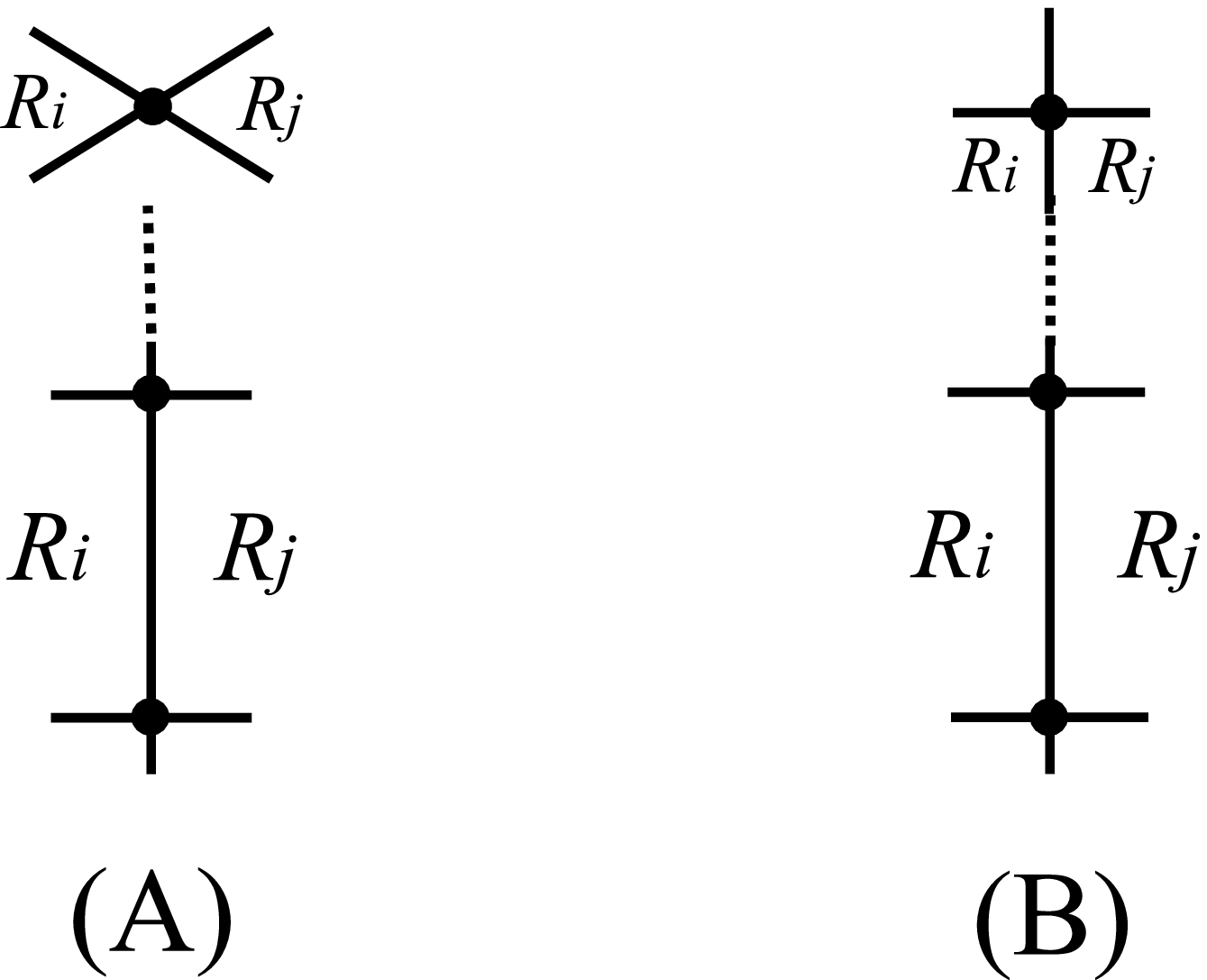}
 \caption{}
 \label{fig:lemm1_figure_2}
 \end{center}
\end{figure}
There are two cases: $({\rm A})$ $R_{i}$ and $R_{j}$ are in the opposite corners, $({\rm B})$ $R_{i}$ and $R_{j}$ are adjacent. In the case $({\rm A})$, this graph cannot be a universe because there is an odd number of edges in the dotted circle in Figure \ref{fig:lemm1_figure_3}.
In the case $({\rm B})$, this graph has a splittable part as Figure \ref{fig:lemm1_figure_3} contradicting the assumption that $\tilde{K}$ is proper. 

\begin{figure}[h]
 \begin{center}
 \includegraphics[scale=0.4]{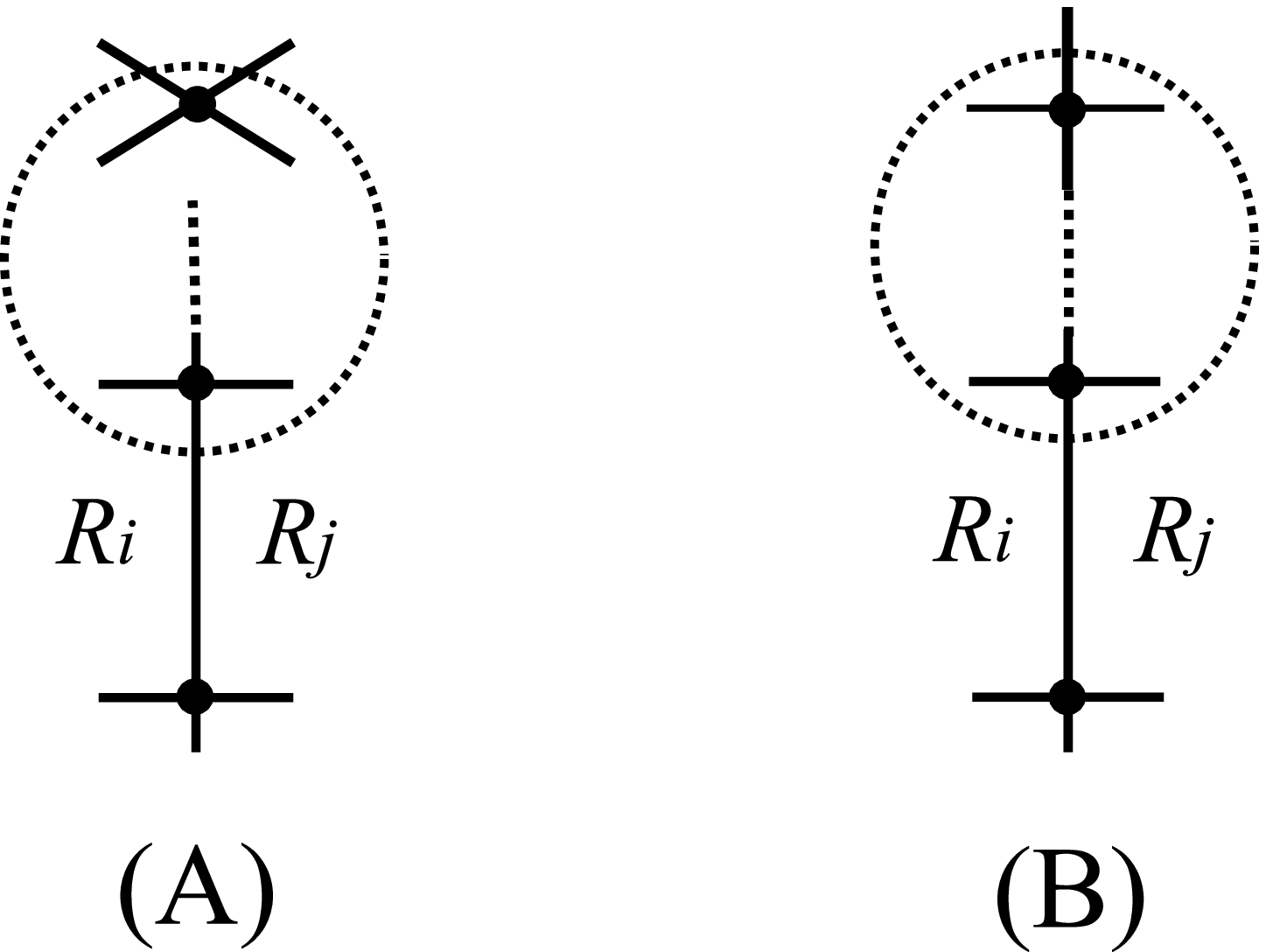}
 \caption{}
 \label{fig:lemm1_figure_3}
 \end{center}
\end{figure}

This is a contradiction, proving the lemma. 
\end{proof}


\begin{lemma}\label{lem:lemma2}
\rm
Let $K$ be a knot, $\tilde{K}$ be a proper diagram of $K$, $\hat{K}$ be the universe obtained from $\tilde{K}$, $R_{1}, R_{2}, \dots, R_{c(\hat{K})+2}$ be regions of $\tilde{K}$, $R_{i}$ and $R_{j}$ be adjacent regions, and $r_{i}$ ($r_{j}$, respectively) be the number of vertices of $R_{i}$ ($R_{j}$, respectively).
If $p(K)=p(\tilde{K}; i, j)$, then we have the following inequality.
\[
p(K)+r_{i}+r_{j} \geq 2c(K)+2.
\]

\end{lemma}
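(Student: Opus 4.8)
The plan is to combine Kauffman's Lemma (Lemma \ref{lem:Kauffman_lemma}) with Lemma \ref{lem:lemma1} to obtain an exact bookkeeping of how the $c(\tilde{K})$ vertices of $\hat{K}$ are distributed among the input/output points, the remaining boundary points, and the interior vertices, and then to convert the transposition counts of Kauffman's Lemma into a lower bound for $p(\tilde{K}; i, j)$ that explicitly involves $r_i$ and $r_j$.

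First I would classify the vertices. The boundary points are exactly the vertices lying on $R_i$ or $R_j$. Since $R_i$ carries $r_i$ vertices and $R_j$ carries $r_j$ vertices and, by Lemma \ref{lem:lemma1}, they share exactly two vertices, inclusion--exclusion gives $r_i + r_j - 2$ boundary points in total. Two of these are the input point and the output point, so the number of remaining boundary points is $r_i + r_j - 4$, while the number of interior vertices is $c(\tilde{K}) - (r_i + r_j - 2)$.

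Next I would apply Kauffman's Lemma. In passing from the clocked state to the counterclocked state, the markers at the input and output points are each transposed once, each of the $r_i + r_j - 4$ remaining boundary markers is transposed twice, and each of the $c(\tilde{K}) - (r_i + r_j - 2)$ interior markers is transposed at least four times. Because every transposition moves exactly two markers, the minimal number of transpositions realizing this passage is at least half the total marker-movement count, so $p(\tilde{K}; i, j) - 1 \ge \frac{1}{2}\big(2\cdot 1 + 2(r_i + r_j - 4) + 4(c(\tilde{K}) - r_i - r_j + 2)\big)$.

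Finally I would simplify: the right-hand side collapses to $2c(\tilde{K}) + 1 - r_i - r_j$, whence $p(\tilde{K}; i, j) + r_i + r_j \ge 2c(\tilde{K}) + 2$. Since $c(\tilde{K}) \ge c(K)$ and $p(K) = p(\tilde{K}; i, j)$ by hypothesis, this gives $p(K) + r_i + r_j \ge 2c(K) + 2$. The only delicate point is the vertex bookkeeping, namely invoking Lemma \ref{lem:lemma1} to pin down the number of shared vertices and recognizing that the input and output points are themselves among the boundary points; once that is settled, everything reduces to the direct computation above, and I expect no genuinely hard step to remain.
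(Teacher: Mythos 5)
Your proposal is correct and follows essentially the same route as the paper's proof: counting the $r_i+r_j-2$ boundary points via Lemma \ref{lem:lemma1}, applying Kauffman's Lemma \ref{lem:Kauffman_lemma} to the input/output, remaining boundary, and interior markers, and dividing the total marker-movement count by two (since each transposition moves two markers) before invoking $c(\tilde{K})\geq c(K)$. Your write-up is in fact slightly more careful than the paper's, as it makes explicit both the inclusion--exclusion step and the reason the division by two is legitimate.
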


\begin{proof}
\rm
From Lemma \ref{lem:lemma1}, we see that $r_{i}+r_{j} - 2$ vertices around the starred regions are boundary points. 
Since every proper universe has one input point and one output point, there are $r_{i}+r_{j} - 4$ boundary points exept for the input point and the output point. 
From Lemma \ref{lem:Kauffman_lemma}, two markers are transposed once, $r_{i}+r_{j} - 4$ markers are transposed twice and the other markers are transposed four times or more. 
Thus we have
\begin{equation*}
\begin{split}
 p(K) &\geq \frac{1 \times 2 + 2(r_{i}+r_{j} - 4) + 4(c(\tilde{K}) - (r_{i}+r_{j} - 2))}{2} +1 \\
      &= - r_{i} - r_{j} + 2c(\tilde{K}) + 2.
\end{split}
\end{equation*}
Using $c(\tilde{K}) \geq c(K)$, we obtain. 
\[
p(K)+r_{i}+r_{j} \geq 2c(K)+2.
\]
This proves the lemma.
\end{proof}


\begin{prop}\label{prop:prop1}

\rm
Let $K$ be a knot, $\tilde{K}$ be a proper diagram of $K$, $\hat{K}$ be the universe obtained from $K$,  $R_{1}, R_{2}, \dots, R_{c(\tilde{K})+2}$ be regions of $\tilde{K}$, and $r_{k}$ be the number of vertices of $R_{k}$ ($k=1, 2, \dots, c(\tilde{K})+2$). 
If there exist adjacent regions $R_{i}$ and $R_{j}$ with $r_{i} + r_{j} =c(\tilde{K}) + 2$, then $K$ is a two-bridge knot.

\end{prop}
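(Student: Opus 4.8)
The plan is to turn the arithmetic hypothesis $r_i+r_j=c(\tilde{K})+2$ into a rigid description of the universe $\hat{K}$ and then to recognise that universe as the closure of a rational tangle, from which the two-bridge property follows for every choice of over/under information.

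First I would unwind the hypothesis. By Lemma~\ref{lem:lemma1} the adjacent regions $R_i$ and $R_j$ share exactly two vertices, so the number of vertices lying on $R_i$ or $R_j$ is exactly $r_i+r_j-2=c(\tilde{K})$. As $\hat{K}$ has only $c(\tilde{K})$ vertices altogether, \emph{every} crossing must touch $R_i$ or $R_j$; equivalently every crossing lies on the boundary of the disk $D:=R_i\cup R_j$. Drawing the shared edge in the interior of $D$, I would place the crossings $1,2,\dots,c(\tilde{K})$ in cyclic order along $\partial D$, with the two shared vertices cutting $\partial D$ into the $R_i$-arc and the $R_j$-arc.

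Next I would describe the knot off $\partial D$ by counting edge-ends at each vertex. Two of the four edges at every crossing lie on $\partial D$; at a non-shared crossing the remaining two leave $D$, while at a shared crossing one leaves $D$ and the other is the shared edge lying inside $D$. Hence $\hat{K}$ is the union of the $c(\tilde{K})$ boundary arcs, a single arc inside $D$ joining the two shared vertices, and $c(\tilde{K})-1$ mutually disjoint embedded arcs in the exterior of $D$; in particular there is no crossing off $\partial D$, and as one traverses $K$ the boundary arcs and the exterior arcs alternate. Reading off the exterior degrees, each non-shared crossing is an endpoint of two exterior arcs and each shared crossing of exactly one, so the exterior arc system has every vertex of degree two except the two shared vertices, which have degree one. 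Using that $\tilde{K}$ is proper and that $K$ is connected to exclude closed loops, this forces the exterior arcs to form a single non-crossing Hamiltonian path between the two shared crossings.

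Finally I would recognise this configuration — a boundary cycle, one interior chord, and a non-crossing exterior spanning path — as the universe of a $4$-plat, that is, the numerator closure of a rational tangle whose successive twist regions are read off along the path. Since a rational tangle remains rational under any assignment of over/under information, $K$ is a rational knot and therefore a two-bridge knot, invoking the standard correspondence in \cite{MR0515288}; equivalently the two arcs of $K$ meeting $R_i$ and $R_j$ serve as the two bridges. I expect the main obstacle to be exactly this last recognition step: extracting the rational (nested) structure of the exterior path from the purely combinatorial data, and making the whole argument at the level of the universe so that the conclusion is independent of the over/under information of $\tilde{K}$. The remaining point, that the resulting bridge presentation has two bridges rather than one, is immediate once $K$ is known to be nontrivial.
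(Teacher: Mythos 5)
Your setup coincides with the paper's: by Lemma \ref{lem:lemma1} and the hypothesis $r_i+r_j=c(\tilde{K})+2$, every crossing of $\hat{K}$ lies on $\partial(R_i\cup R_j)$, and your count of exterior edge-ends (two at each non-shared crossing, one at each shared crossing, so the exterior arcs form a graph with all degrees two except the two shared crossings) is a correct reformulation of the configuration in Figure \ref{fig:prop_figure_1}. So the skeleton of your argument is the same as the paper's; the issue is that the two steps you leave unproved are not routine.

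The main gap is the step you yourself flag: passing from ``non-crossing Hamiltonian path in the exterior with prescribed endpoints'' to ``4-plat''. That implication is the entire content of the proposition, and it is precisely what the paper proves in Figures \ref{fig:prop_figure_2}--\ref{fig:prop_figure_8}: starting from a shared vertex, each successive edge has only two admissible endpoints, namely the two neighbours on $\partial(R_i\cup R_j)$ of the contiguous block of already-joined vertices, because an arc jumping to any other vertex would separate the exterior disk and cut off unvisited vertices that the path could then never reach (equivalently, would force a splittable part). Hence the visited set always grows as a contiguous arc, the binary left/right choices are exactly the twisted boxes, and the result is the standard two-bridge universe of Figure \ref{fig:prop_figure_10}. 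This contiguity induction is short, but without it ``non-crossing Hamiltonian'' has not been shown to imply ``rational''; recognition is not a proof. The second, smaller gap is your exclusion of closed loops among the exterior arcs: you assert it follows from properness and connectivity, but connectivity alone genuinely cannot do it. For example, take six crossings in circle order $u_1,w_1,w_2,w_3,u_2,x$ with an exterior cycle through $w_1,w_2,w_3$ and exterior path $u_1\!-\!x\!-\!u_2$; tracing the strands shows this is a single-component (connected) knot universe meeting all your degree constraints. What kills such cycles is properness: since the path joining $u_1$ to $u_2$ cannot cross the cycle, planarity confines it to one complementary region, so the cycle and everything it encloses meets the rest of the universe in exactly two edges of $\partial(R_i\cup R_j)$ --- a splittable part. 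You did invoke properness there, so the claim is true, but the argument must be supplied; with both steps filled in, your proof becomes the paper's.
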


\begin{proof}
\rm
Since two adjacent regions $R_{i}$ and $R_{j}$ share exactly two vertices (from Lemma \ref{lem:lemma1}) and $r_{i} + r_{j} = c(\tilde{K}) + 2$, a part of $\hat{K}$ looks like Figure \ref{fig:prop_figure_1}.
Here we draw all the vertices and some edges.

\begin{figure}[h]
 \begin{center}
 \includegraphics[scale=0.4]{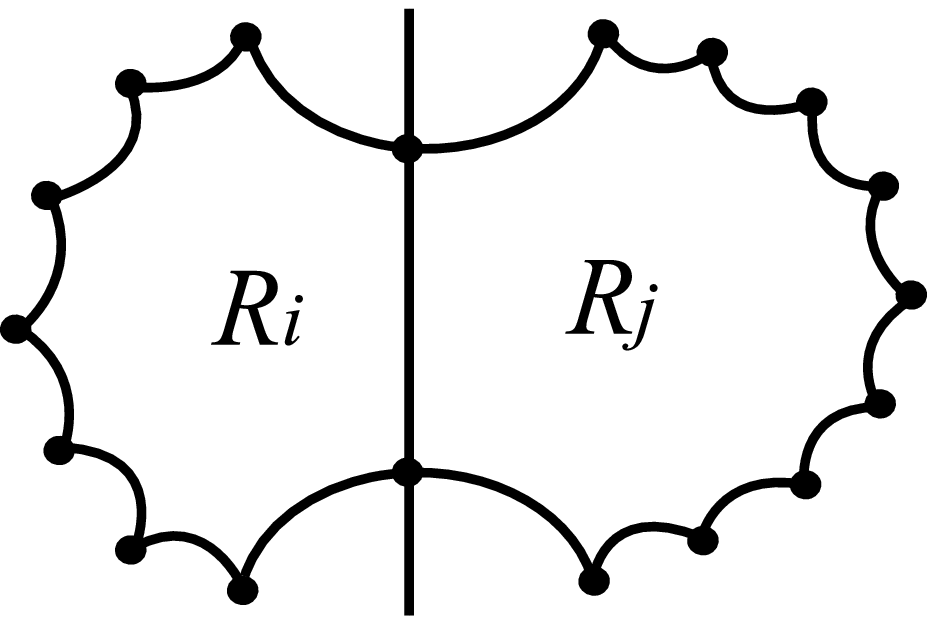}
 \caption{}
 \label{fig:prop_figure_1}
 \end{center}
\end{figure}

To restore $\hat{K}$ we need to add edges following the three conditions below.

\begin{enumerate}
\item There are four edges per one vertex.
\item The resulting universe has no splittable part.
\item Any new edge makes no more vertices.
\end{enumerate}

We will show that the resulting universe is a two-bridge knot projection.

We start drawing an edge from one of the vertices which $R_{i}$ and $R_{j}$ share (See Figure \ref{fig:prop_figure_2}).

\begin{figure}[h]
 \begin{center}
 \includegraphics[scale=0.4]{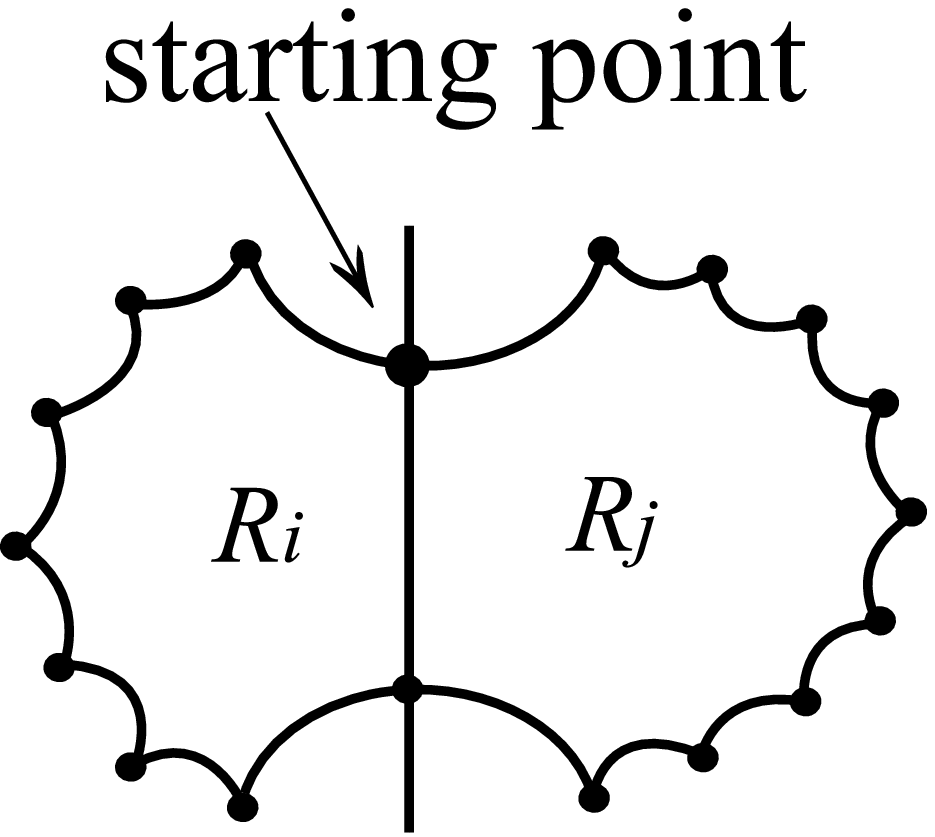}
 \caption{}
 \label{fig:prop_figure_2}
 \end{center}
\end{figure}

Then the end point of this edge should be either the right-most point in the left hand side or the left-most point in the right hand side, since otherwise we would have a splittable part as in Figure \ref{fig:prop_figure_4}. 

\begin{figure}[h]
 \begin{center}
 \includegraphics[scale=0.4]{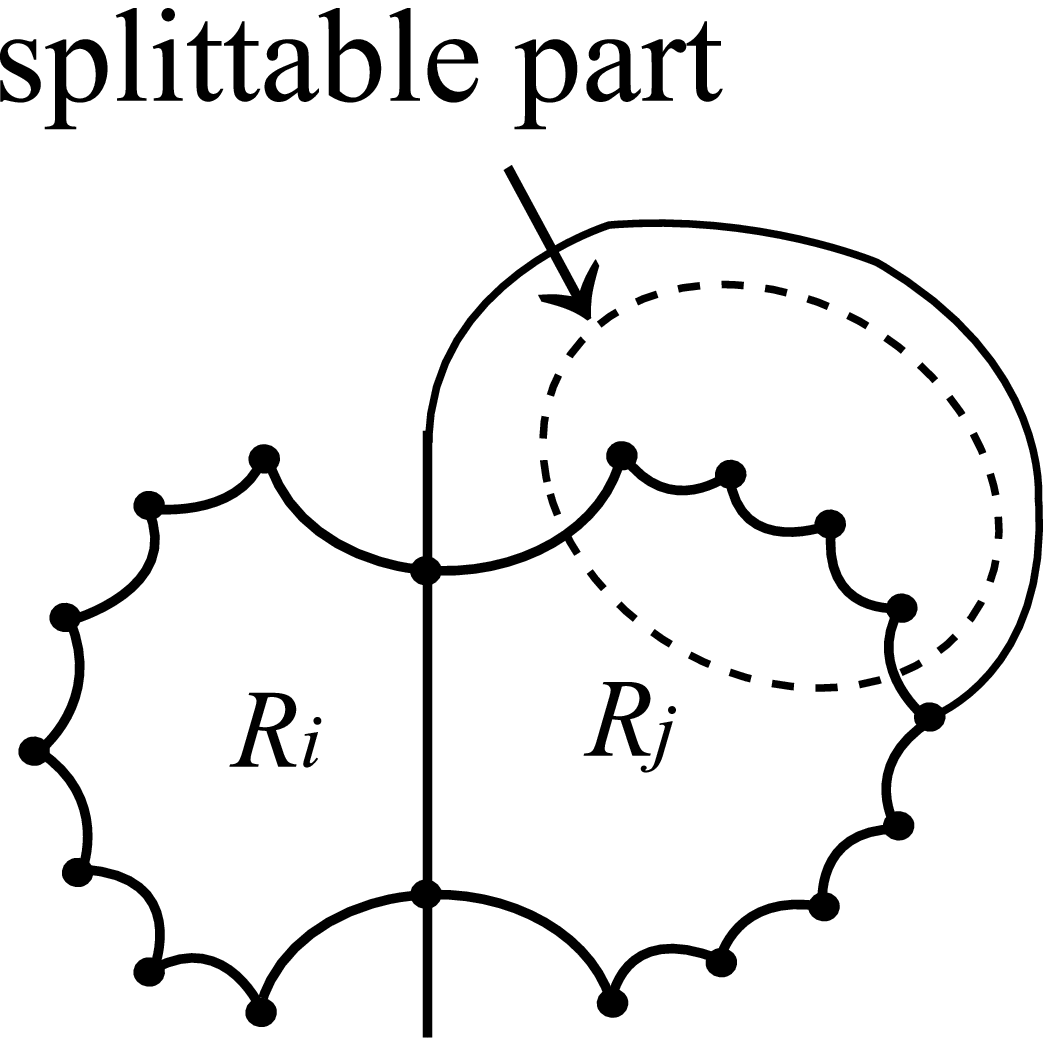}
 \caption{}
 \label{fig:prop_figure_4}
 \end{center}
\end{figure}

In Figure \ref{fig:prop_figure_5}, we choose the point in the right side of the starting point for example.

\begin{figure}[h]
 \begin{center}
 \includegraphics[scale=0.4]{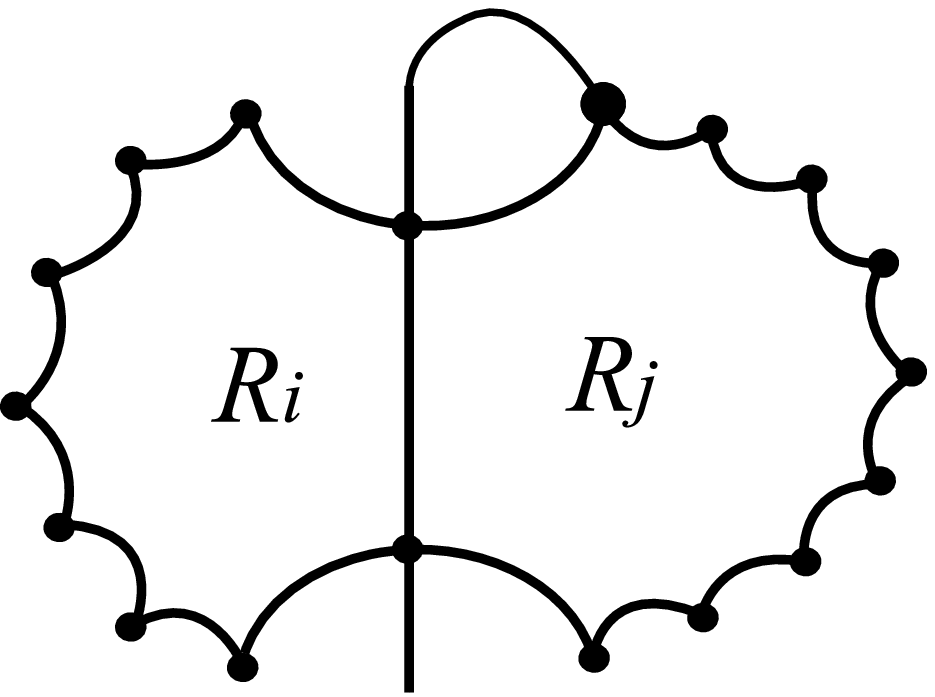}
 \caption{}
 \label{fig:prop_figure_5}
 \end{center}
\end{figure}

Next we draw another  edge from the end point of the first edge.
Then the end point of this edge should be the vertex next to the starting point of this edge or the vertex next to the starting point of the first edge (Figure \ref{fig:prop_figure_6}). 
\begin{figure}[h]
 \begin{center}
 \includegraphics[scale=0.4]{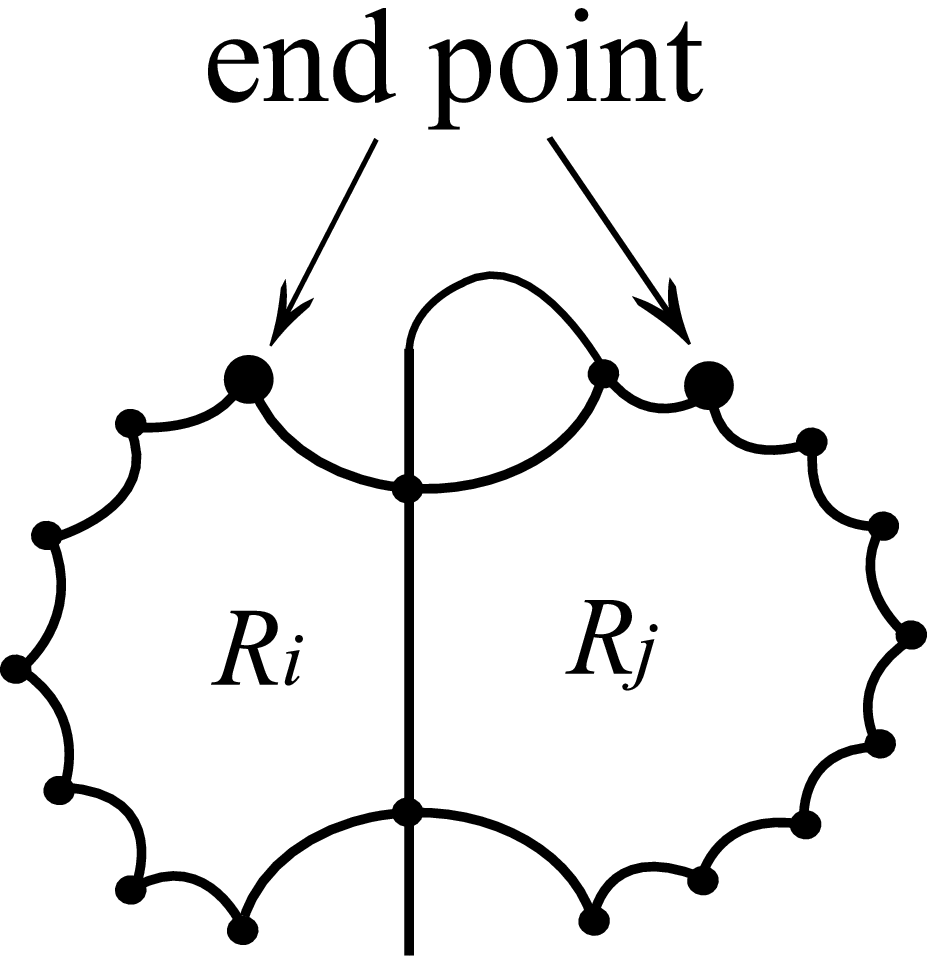}
 \caption{}
 \label{fig:prop_figure_6}
 \end{center}
\end{figure}
In Figure \ref{fig:prop_figure_7}, we choose the point in the right side of the starting point of this edge for example.

\begin{figure}[h]
 \begin{center}
 \includegraphics[scale=0.4]{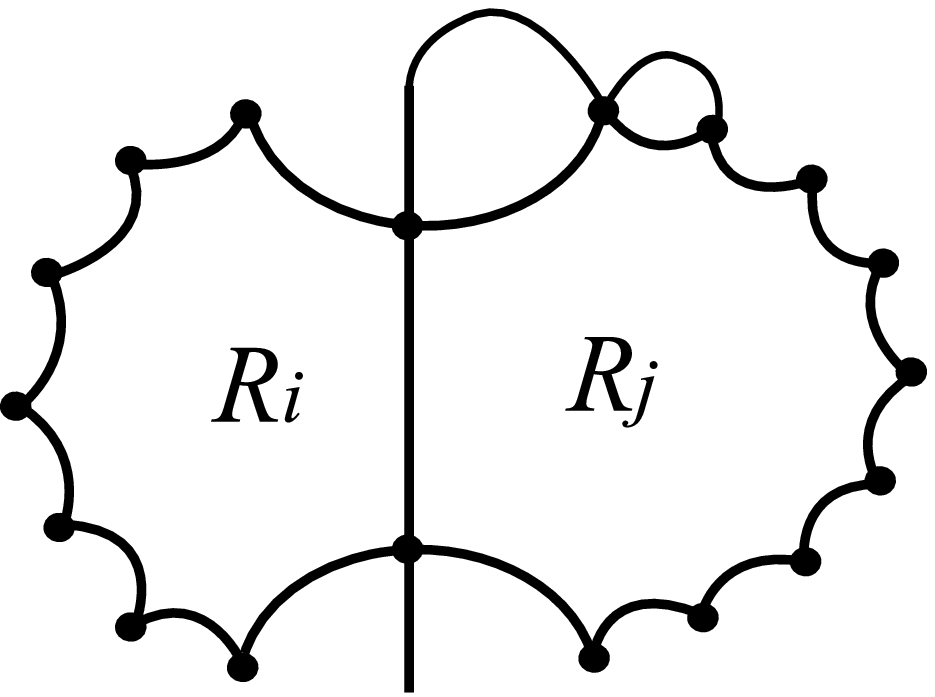}
 \caption{}
 \label{fig:prop_figure_7}
 \end{center}
\end{figure}

When we continue drawing edges, each edge has two choices for its end point.
Therefore we obtain Figure \ref{fig:prop_figure_8} as an example.

\begin{figure}[h]
 \begin{center}
 \includegraphics[scale=0.4]{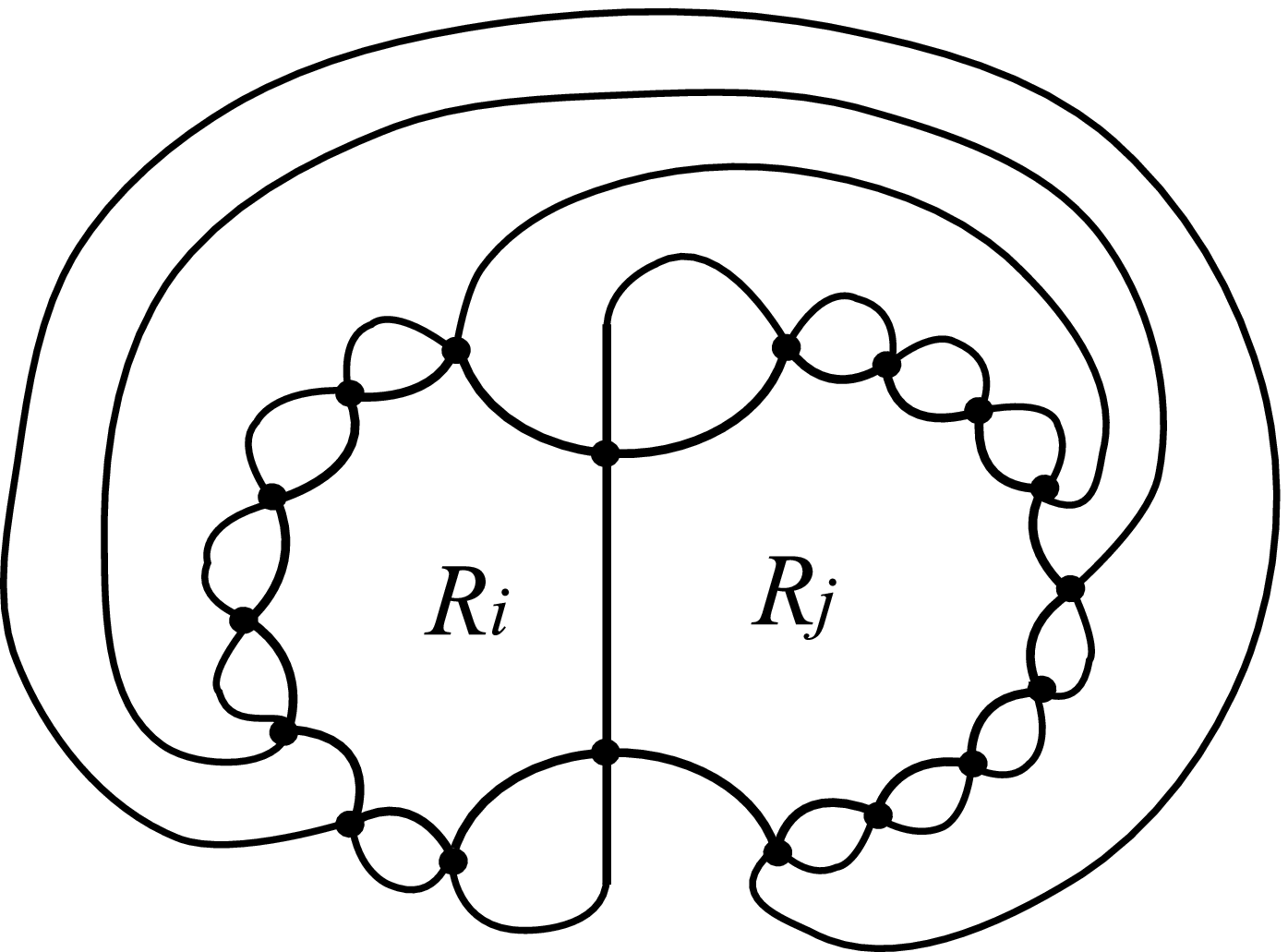}
 \caption{}
 \label{fig:prop_figure_8}
 \end{center}
\end{figure}

As in Figure \ref{fig:prop_figure_9}, we regard a twisted part \raisebox{-1mm}{\includegraphics[scale=0.1]{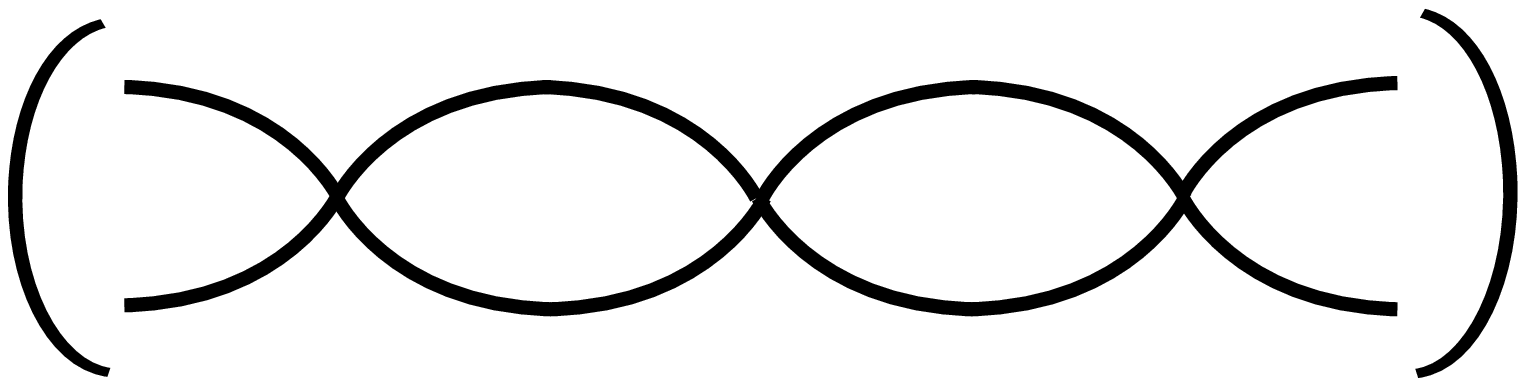}} as a box.

\begin{figure}[h]
 \begin{center}
 \includegraphics[scale=0.4]{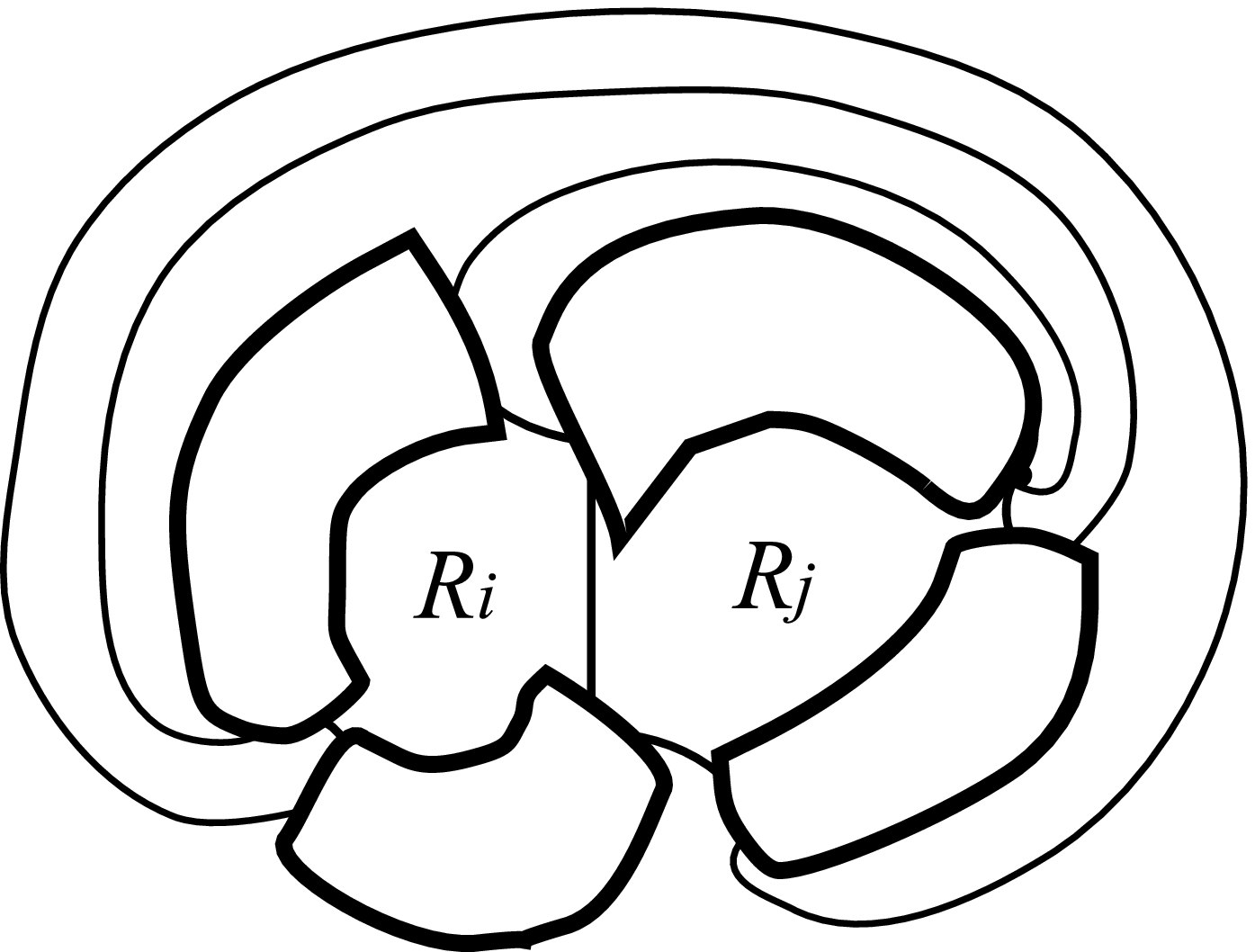}
 \caption{}
 \label{fig:prop_figure_9}
 \end{center}
\end{figure}

Expanding $R_{i}$ on the sphere and smoothing the edges, we obtain a universe $\hat{K}$ as in Figure \ref{fig:prop_figure_10}.

\begin{figure}[h]
 \begin{center}
 \includegraphics[scale=1.0]{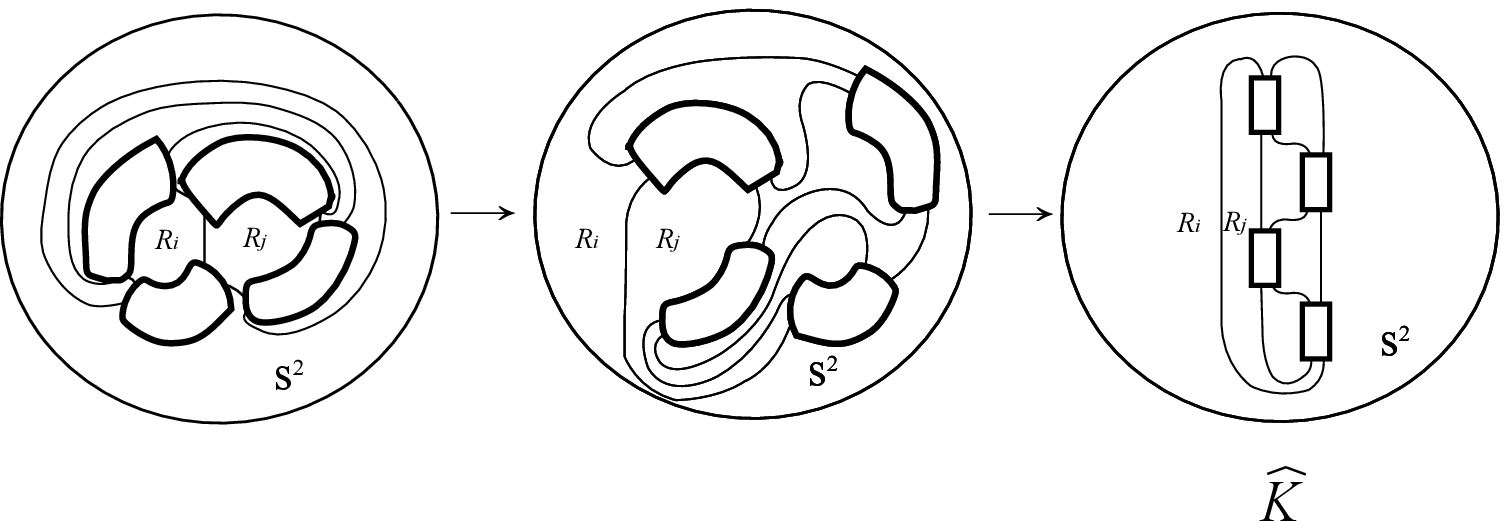}
 \caption{}
 \label{fig:prop_figure_10}
 \end{center}
\end{figure}

We see that $\hat{K}$ is a two-bridge knot projection.
Then $K$ is a two-bridge knot or the unknot.

This completes the proof.

\end{proof}

We now return to the proof of the main theorem.

\begin{proof}[Proof of \rm Main Theorem]
\rm
We suppose that $p(K)=c(K)$. We choose $\tilde{K}$, $R_{i}$ and $R_{j}$ so that $p(\tilde{K}; i, j) = p(K)$, where $R_{i}$ and $R_{j}$ are starred regions.
By Lemma \ref{lem:lemma2}, we see that
\[
p(K) + r_{i}+r_{j} \geq 2c(K)+2,
\]
and hence we have
\[
r_{i} + r_{j} \geq c(K)+2.
\]
Moreover using Lemma \ref{lem:lemma1} and inequality \eqref{eq:1}, we see
\begin{equation*}
\begin{split}
 r_{i} + r_{j} - 2 &\leq c(\tilde{K}) \\
                   &\leq p(K) \\
                   &= c(K).
\end{split}
\end{equation*}
Then we have the inequality
\[
r_{i} + r_{j} \leq c(K) + 2.
\]
Therefore we obtain
\[
r_{i} + r_{j} = c(K) + 2.
\]
Using Proposition \ref{prop:prop1}, we see that $K$ is a two-bridge knot.

Conversely, if $K$ is a two-bridge knot, a diagram $\tilde{K}$ which has $c(K)=c(\tilde{K})$ is one of the following two diagrams in Figure \ref{fig:main_theorem_figure_1}.

\begin{figure}[h]
 \begin{center}
 \includegraphics[scale=0.4]{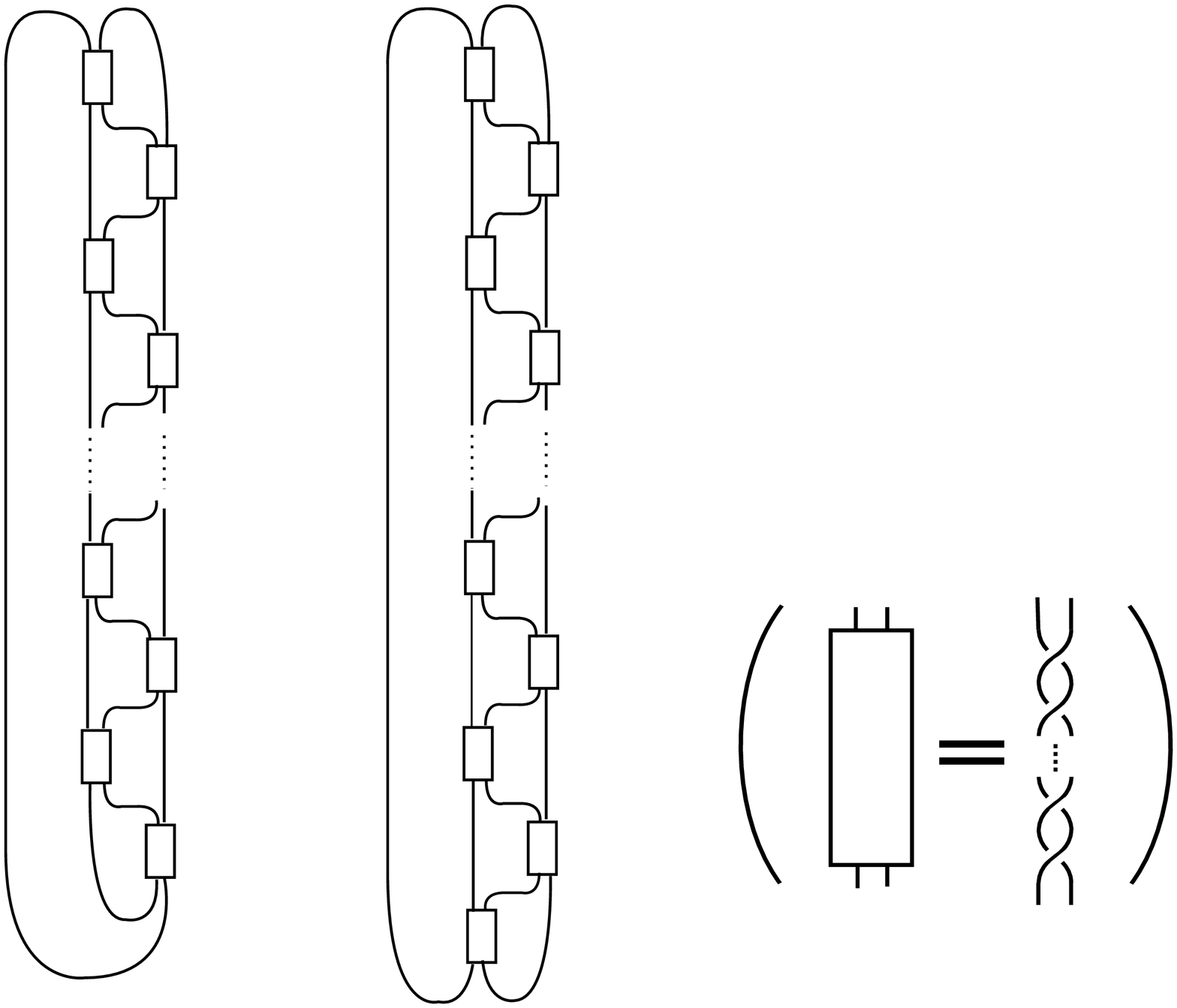}
 \caption{}
 \label{fig:main_theorem_figure_1}
 \end{center}
\end{figure}

We will show that $p(K)$ is equal to $c(K)$ when $\tilde{K}$ has an even number of boxes.
When we choose the starred regions as in Figure \ref{fig:main_theorem_figure_2}, the clocked state and the counterclocked state are as in Figure \ref{fig:main_theorem_figure_3}.

\begin{figure}[h]
 \begin{center}
 \includegraphics[scale=0.3]{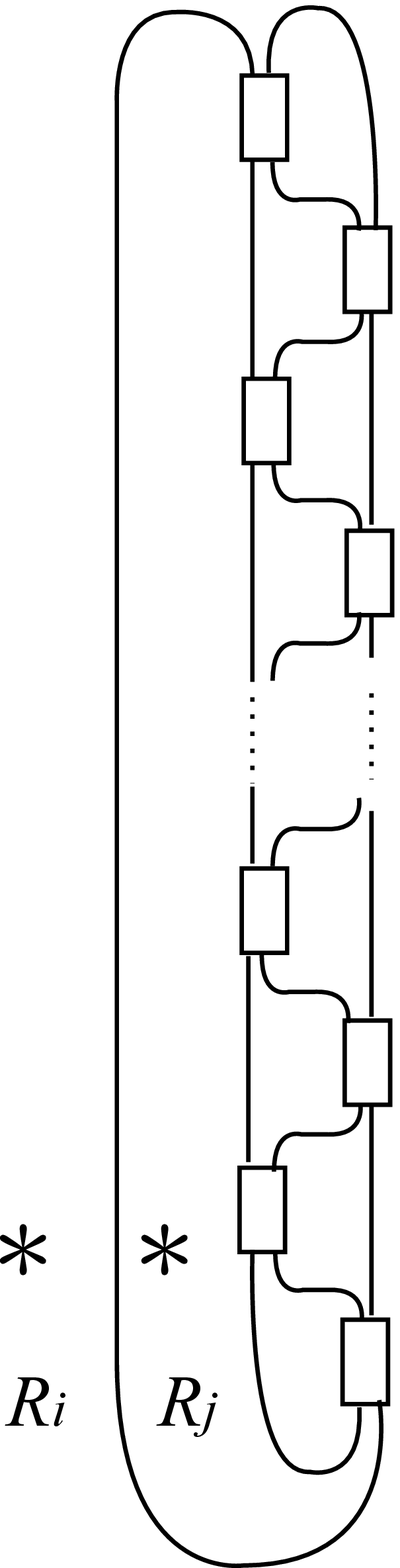}
 \caption{}
 \label{fig:main_theorem_figure_2}
 \end{center}
\end{figure}

\begin{figure}[h]
 \begin{center}
 \includegraphics[scale=0.8]{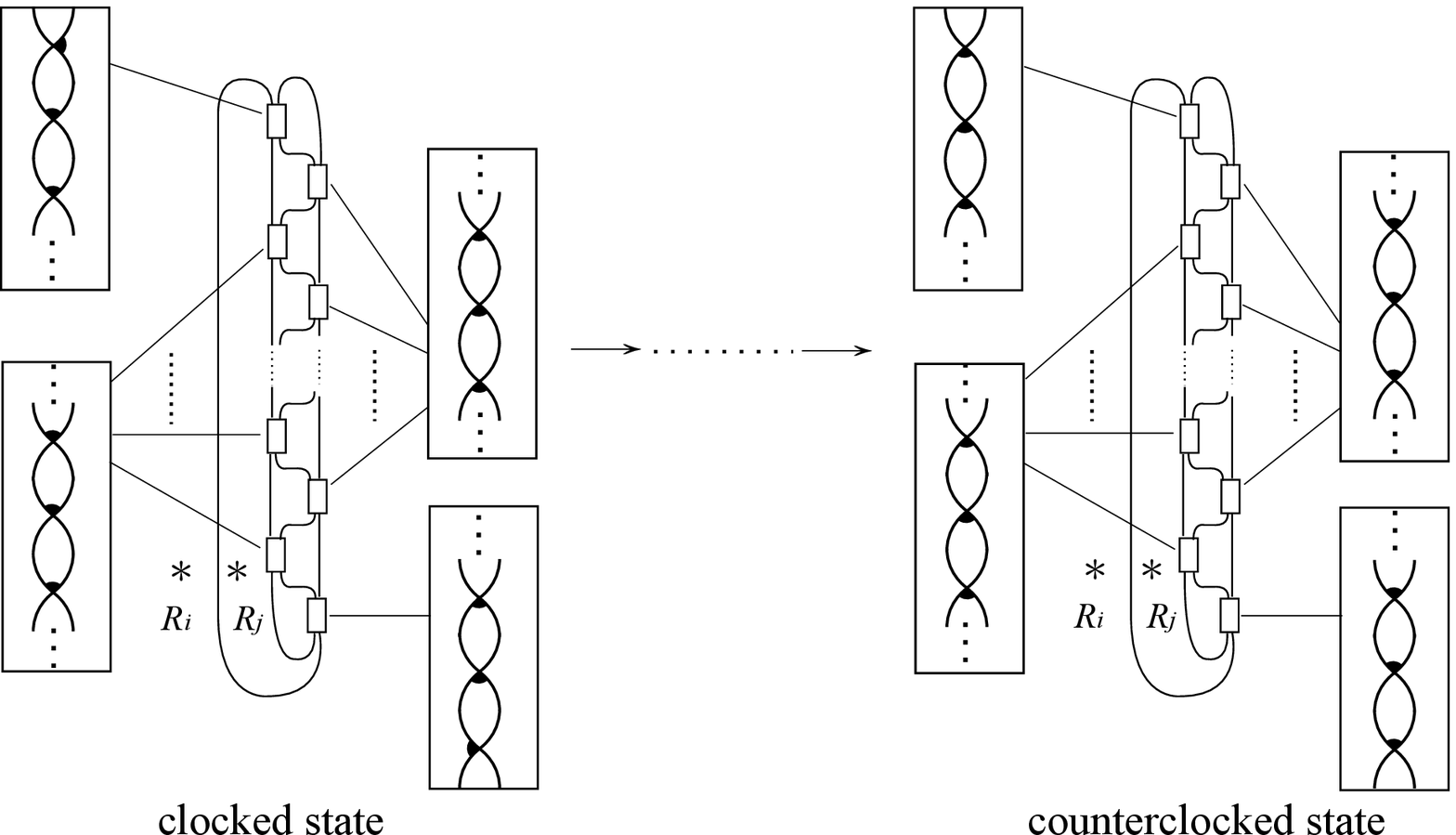}
 \caption{}
 \label{fig:main_theorem_figure_3}
 \end{center}
\end{figure}

Since this universe has two markers which are transposed once and $c(K)-2$ markers which are transposed twice, we have
\begin{equation*}
\begin{split}
 p(K) &\leq p(\tilde{K}; i, j) \\
      &= 1 + \frac{1 \times 2 + 2(c(K) - 2)}{2} \\
      &= c(K).
\end{split}
\end{equation*}
Using Theorem \ref{thm:relation_between_p_and_c}, we obtain
\[
p(K) = c(K).
\]
Similarly, we can prove that $p(K)$ is equal to $c(K)$ when $K$ has an odd number of boxes.
This complete the proof.

\end{proof}

The following example shows that if $K$ is not a prime knot, then the inequality of Theorem \ref{thm:relation_between_p_and_c} does not hold. 

\begin{example}
\rm
Let $K$ be a non-prime knot as in Figure \ref{fig:example_of_non_prime_knot_1}. 
\begin{figure}[h]
 \begin{center}
 \includegraphics[scale=0.4]{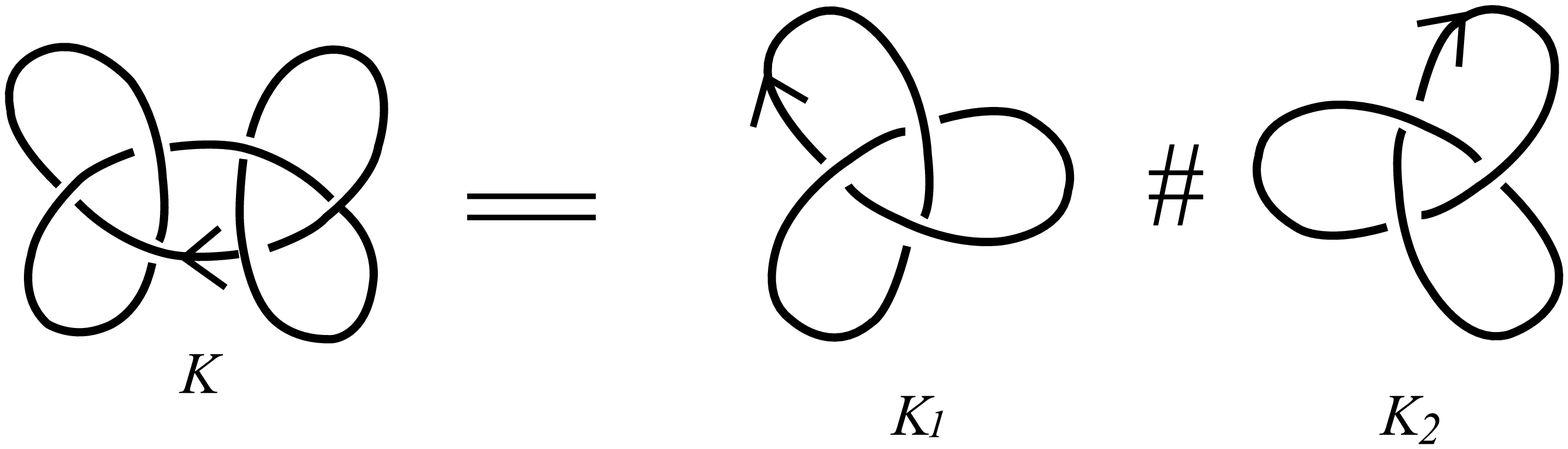}
 \caption{}
 \label{fig:example_of_non_prime_knot_1}
 \end{center}
\end{figure}
When we choose the universe and the starred regions as in Figures \ref{fig:example_of_non_prime_knot_2} and \ref{fig:example_of_non_prime_knot_3}, we can draw the lattice as in Figure \ref{fig:example_of_non_prime_knot_5}. 
\begin{figure}[h]
 \begin{center}
 \includegraphics[scale=0.4]{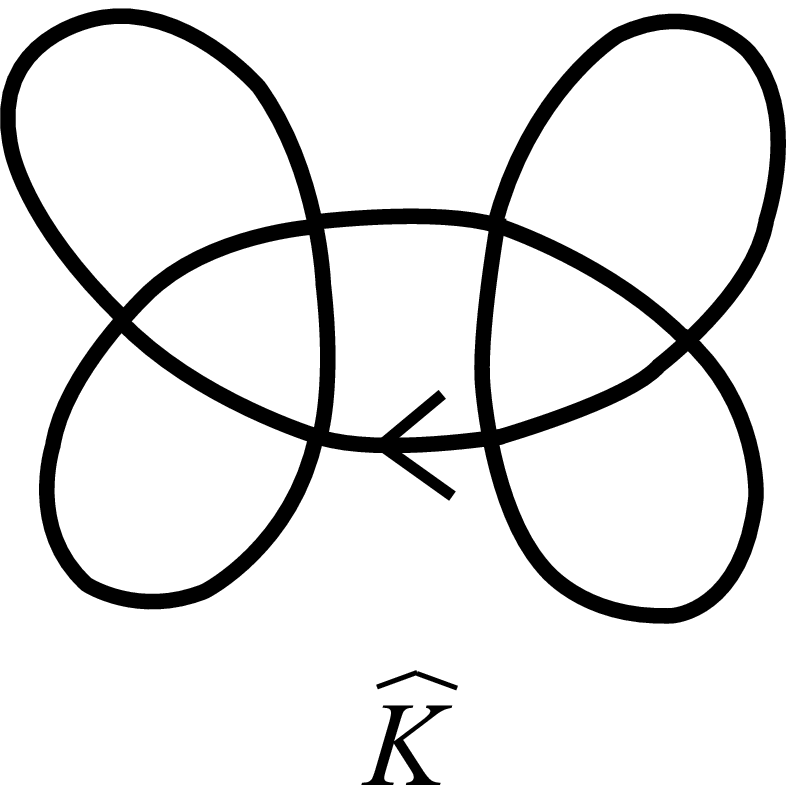}
 \caption{}
 \label{fig:example_of_non_prime_knot_2}
 \end{center}
\end{figure}
\begin{figure}[h]
 \begin{center}
 \includegraphics[scale=0.4]{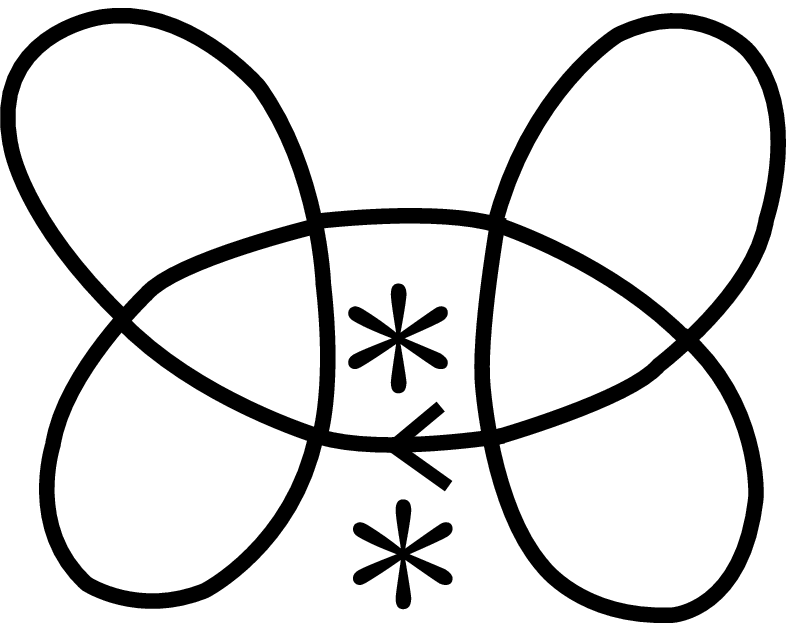}
 \caption{}
 \label{fig:example_of_non_prime_knot_3}
 \end{center}
\end{figure}
\begin{figure}[h]
 \begin{center}
 \includegraphics[scale=0.6]{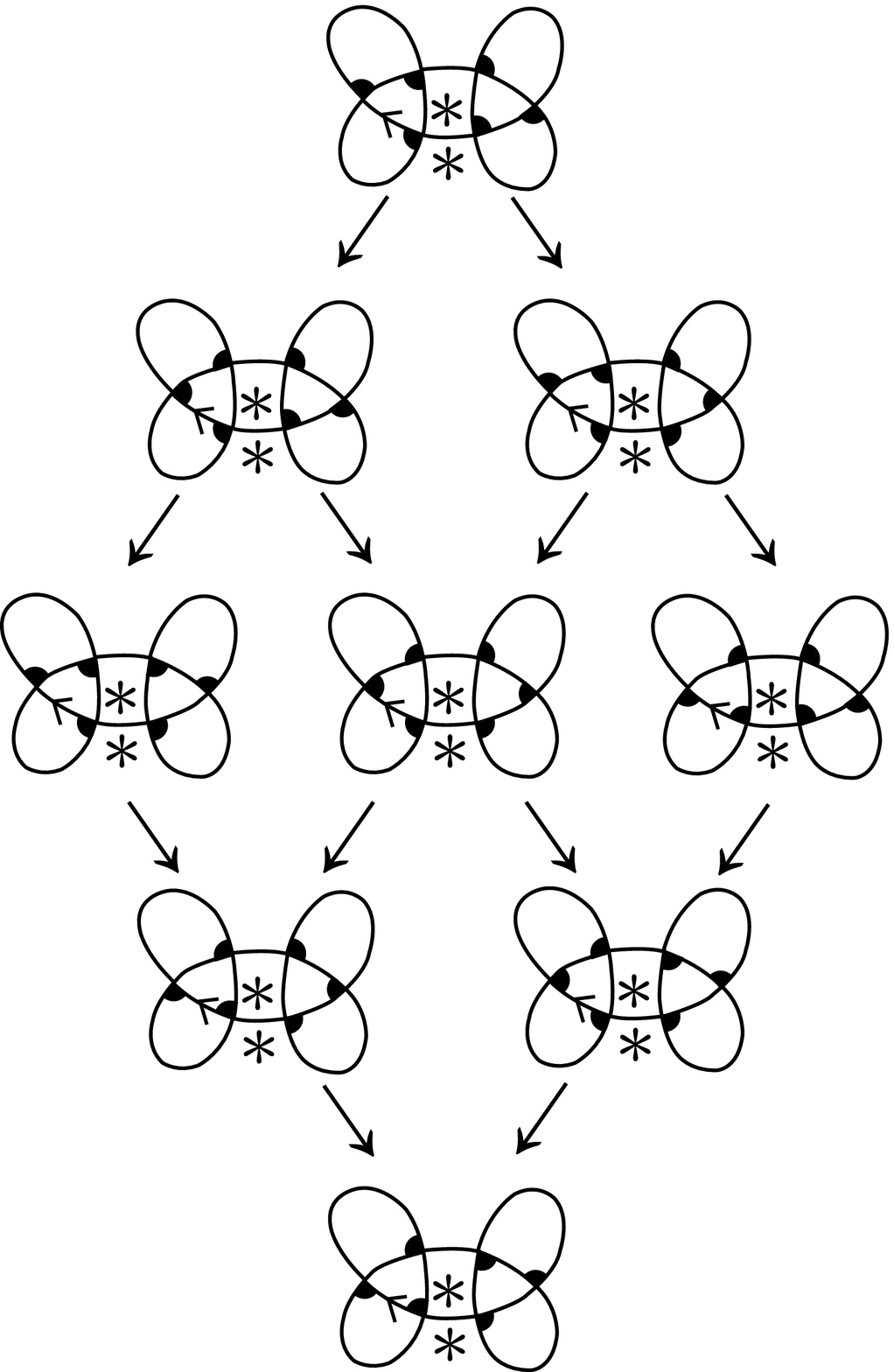}
 \caption{}
 \label{fig:example_of_non_prime_knot_5}
 \end{center}
\end{figure}

Therefore we see
\[
p(K) \leq 5 < 6 = c(K).
\]

\end{example}

\providecommand{\bysame}{\leavevmode\hbox to3em{\hrulefill}\thinspace}
\providecommand{\MR}{\relax\ifhmode\unskip\space\fi MR }
\providecommand{\MRhref}[2]{%
  \href{http://www.ams.org/mathscinet-getitem?mr=#1}{#2}
}
\providecommand{\href}[2]{#2}

\end{document}